\documentclass{amsproc}
\usepackage{amssymb}
\usepackage{tikz}
\usepackage{graphicx}
\usepackage{amsmath}
\usepackage{subfigure}
\usepackage{calc}
\usepackage{float}

\newtheorem{theorem}{Theorem}[section]
\newtheorem{lemma}{Lemma}[section]

\newtheorem{remark}{Remark}[section]
\newtheorem{corollary}{Corollary}[section]

\newtheorem{example}{Example}[section]

\newtheorem{conjecture}{Conjecture}[section]
\def\p{\prime}

\def\0{{\bf{0}}}
\def\1{{\mbox{\tiny $ (1) $ }}}
\def\2{{\mbox{\tiny $ (2) $ }}}
\numberwithin{equation}{section}

\begin{document}

\title{Refinable functions with PV dilations.}

\maketitle
\author{Wayne Lawton\footnote{This work was done during my visit in the Department of Mathematics and Statistics at Auburn University in Spring 2016. I thank Professor Richard Zalik for his great hospitality during my stay in the department.}}
\address{Adjunct Professor, School of Mathematics $\&$ Statistics, \\ University of Western Australia, Perth}
\email{wayne.lawton@uwa.edu.au}
\begin{abstract}
A PV number is an algebraic integer $\alpha$ of degree $d \geq 2$ all of whose
Galois conjugates other than itself have modulus less than $1$.
Erd\"{o}s \cite{erdos} proved that the Fourier transform $\widehat \varphi,$ of a
nonzero compactly supported scalar valued function satisfying the refinement equation
$\varphi(x) = \frac{|\alpha|}{2}\varphi(\alpha x) + \frac{|\alpha|}{2}\varphi(\alpha x-1)$
with $PV$ dilation $\alpha,$ does not vanish at infinity so by the Riemann-Lebesgue
lemma $\varphi$ is not integrable.
Dai, Feng and Wang \cite{daifengwang} extended his result to scalar valued solutions of
$\varphi(x) = \sum_k a(k) \varphi(\alpha  x - \tau(k))$ where $\tau(k)$ are integers and
$a$ has finite support and sums to $|\alpha|$.
In (\cite{lawton3}, Conjecture 4.2) we conjectured that their result holds under the weaker assumption that
$\tau$ has values in the ring of polynomials in $\alpha$ with integer coefficients. This
paper formulates a stronger conjecture and provides support for it based on a solenoidal representation of $\widehat \varphi,$ and deep results of Erd\"{o}s and Mahler \cite{erdosmahler};Odoni \cite{odoni} that give lower bounds for the asymptotic density of integers represented by integral binary forms of degree $> 2;$degree $ = 2,$ respectively. We also construct an integrable vector valued refinable function with PV dilation. \\
\end{abstract}
\noindent{\bf 2010 Mathematics Subject Classification : 11D45, 11R06, 42C40, 43A60}
\allowdisplaybreaks
\newcommand{\dotcup}{\ensuremath{\mathaccent\cdot\cup}}
\newcommand{\bigdotcup}{\bigcup \! \! \! \! \!  \cdot \ \ }

\section{Introduction}
\label{intro}
In this paper $\mathbb Z, \mathbb N = \{1,2,3,...\}, \mathbb Q, \mathbb A, \mathcal O, \mathbb R, \mathbb C$
are the integer, natural, rational, algebraic, algebraic integer, real, and complex numbers. For a ring $R,$
$R[X];R[X,X^{-1}]$ is the ring of polynomials; Laurent polynomials with coefficients in $R$ 
in the indeterminant $X.$ If $\alpha \in \mathbb A$
then $\mathbb Q[\alpha]$ equals the algebraic number field generated by $\alpha$ and we define $\mathcal O_{\alpha} =
\mathcal O \bigcap \mathbb Q[\alpha],$ the degree function $d \, : \, \mathbb A \rightarrow \mathbb N,$ and
the trace and norm functions $T; N \, : \mathbb A \rightarrow \mathbb Q.$ Their restrictions to
$\mathcal O$ are integer valued. For $\alpha \in \mathbb A,$ $P_\alpha(X) \in \mathbb Q[X]$ is its minimal degree monic polynomial and $L(\alpha)$ is the least common multiple of the denominators of the coefficients of $P_\alpha(X).$
$\mathcal O \bigcap \mathbb Q = \mathbb Z,$ $\alpha \in \mathbb A \Rightarrow L(\alpha) \alpha \in \mathcal O,$ and
$\alpha \in \mathcal O \Rightarrow P_\alpha(X) \in \mathbb Z[X].$
There exists $B(\alpha) \in \mathbb N$ with
\begin{equation}
\label{Oinclusion}
    \mathbb Z[\alpha] = \mathbb Z + \alpha \mathbb Z \cdots + \alpha^{d(\alpha)-1} \mathbb Z \subseteq \mathcal O_{\alpha}
    \subseteq \frac{1}{B(\alpha)}\mathbb Z[\alpha],
\end{equation}
and hence, since $N(\alpha) \alpha^{-1} \in \mathcal O_{\alpha},$
\begin{equation}\label{alphainclusion}
    N(\alpha)B(\alpha)\alpha^{-1} \in \mathbb Z[\alpha].
\end{equation}
\begin{example}\label{integralbasis}
  If $\alpha \in \mathbb A$ and $P_{\alpha}(X) = X^3 - X^2 - 2X -8 = 0$ Dedekind showed
  $($\cite{dedekind}, pp. 30-32$)$, $($\cite{narkiewicz}, pp. 64$)$ that
  $\{1,\alpha,\alpha(\alpha+1)/2 \}$ is an integral basis for $\mathcal O_{\alpha}.$
  For this $($\ref{Oinclusion}$)$ holds with $B(\alpha) = 2$ and both inclusions are proper.
\end{example}
$\mathbb T = \mathbb R/\mathbb Z; \mathbb T_c = \{w \in \mathbb C:|w| = 1\}$ is the circle group represented additively; multiplicatively. For $x \in \mathbb R$ we define $||x|| = \min_{k \in \mathbb Z} |x-k| \in [0,\frac{1}{2}]$ and observe that $||x + y|| \leq ||x|| + ||y||.$ Since $x + \mathbb Z = y + \mathbb Z \Rightarrow ||x|| = ||y||,$ we can define $|| \ || \, : \, \mathbb T \rightarrow [0,\frac{1}{2}]$ by $||x+\mathbb Z|| = ||x||.$ For $\alpha \in \mathbb R \, \backslash \, [-1,1]$ define its Pisot set
\begin{equation}\label{Omega}
  \Lambda_\alpha = \{\, \lambda \in \mathbb R \, \backslash \, \{0\} \, : \, \lim_{j \rightarrow \infty} ||\lambda \alpha^j|| = 0 \, \}.
\end{equation}
A Pisot–-Vijayaraghavan (PV) number \cite{cassels,pisot} is $\alpha = \alpha_1 \in \mathcal O$ with $d(\alpha) \geq 2$
whose Galois conjugates $\alpha_2,...,\alpha_d$ have moduli $< 1.$
The Golden Mean $\frac{1+\sqrt 5}{2} \approx 1.6180$ has Galois congugate
$\frac{1-\sqrt 5}{2} \approx -0.6180$ so it is a PV number.
\begin{theorem}\label{pisot}$($Pisot, Vijayaraghavan$)$
If $\alpha \in \mathbb A \backslash [-1,1]$ has degree $d \geq 2$ and $\Lambda_\alpha \neq \phi$ then $\alpha$ is a PV number and
\begin{equation}\label{Omega2}
  \Lambda_\alpha = \{ \, \alpha^{m} \mu \, : m \in \mathbb Z, \, \mu \in \mathbb Q[\alpha]\, \backslash \, \{0\}, \, T(\mu \alpha^j) \in \mathbb Z, \, j = 0,...,d-1 \, \}.
\end{equation}
Furthermore, for $\lambda \in \Lambda_\alpha,$ $||\lambda \alpha^j|| \rightarrow 0$ exponentially fast.
\end{theorem}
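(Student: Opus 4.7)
The plan is to exploit the linear recurrence satisfied by $\lambda\alpha^n$ through the minimal polynomial of $\alpha$, and then to run a Galois-theoretic analysis on the resulting integer sequence. Fix $\lambda\in\Lambda_\alpha$, write $s_n=\lambda\alpha^n=m_n+r_n$ with $m_n\in\mathbb Z$ the nearest integer and $r_n\to 0$, and let $P(X)=a_dX^d+\cdots+a_0\in\mathbb Z[X]$ be the primitive minimal polynomial of $\alpha$. Since $P(\alpha)=0$, the sequence $(s_n)$ satisfies $\sum_j a_j s_{n+j}=0$; substituting $s=m+r$, the integer $\sum_j a_j m_{n+j}$ equals $-\sum_j a_j r_{n+j}\to 0$, hence vanishes for all $n\ge n_0$. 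Thus $(m_n)$ eventually satisfies the same recurrence as $(\lambda\alpha^n)$.

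The general solution is $m_n=\sum_{i=1}^d c_i\alpha_i^n$ (roots distinct by irreducibility of $P$). Applying any $\sigma$ in the Galois group of a splitting field $L$ to the rational sequence $(m_n)$ and using linear independence of $(\alpha_i^n)_n$ over $\mathbb C$ forces $\sigma(c_i)=c_{\pi(i)}$, where $\pi$ is the permutation induced by $\sigma$ on the roots; transitivity of the action then places $c_1$ in the fixed field of the stabilizer of $\alpha$, so $c_1\in\mathbb Q[\alpha]$ and $c_i=\sigma_i(c_1)$ for the embedding $\sigma_i$ with $\sigma_i(\alpha)=\alpha_i$, that is $m_n=T(c_1\alpha^n)$. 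Writing $r_n=(\lambda-c_1)\alpha^n-\sum_{i\ge 2}\sigma_i(c_1)\alpha_i^n$ as a nontrivial combination of distinct geometric sequences (note $c_1=0$ gives $m_n\equiv 0$, hence $r_n=s_n\to\infty$, contradicting $r_n\to 0$), and observing that such a combination can tend to $0$ only when every participating base has modulus $<1$, the assumption $|\alpha|>1$ pins down $\lambda=c_1\in\mathbb Q[\alpha]$, and the nonvanishing of each $\sigma_i(\lambda)$ for $i\ge 2$ forces $|\alpha_i|<1$, together with the exponential bound $|r_n|=O((\max_{i\ge 2}|\alpha_i|)^n)$.

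To promote $\alpha\in\mathbb A$ to $\alpha\in\mathcal O$, consider the generating function $F(z)=\sum_{n\ge 0}m_{n+n_0}z^n$: it is a rational function with integer Taylor coefficients, and because every $\sigma_i(\lambda)\ne 0$, each reciprocal $1/\alpha_i$ is an actual pole, so in lowest terms over $\mathbb Q$ its denominator is a scalar multiple of the reciprocal polynomial $P^*(z)=z^dP(1/z)=a_d+a_{d-1}z+\cdots+a_0z^d$. Fatou's lemma on rational functions with integer Taylor coefficients forces this lowest-terms denominator, once normalized to have constant term $1$, to lie in $\mathbb Z[z]$; the normalization is $P^*(z)/a_d$, so $a_d\,|\,a_k$ for every $k$, and $\gcd(a_0,\ldots,a_d)=1$ yields $a_d=\pm1$. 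Hence the minimal polynomial of $\alpha$ is monic over $\mathbb Z$, so $\alpha\in\mathcal O$, and $\alpha$ is a PV number.

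For the description (\ref{Omega2}), the preceding steps give $T(\lambda\alpha^n)\in\mathbb Z$ for all $n\ge n_0$, so $\mu:=\alpha^{n_0}\lambda\in\mathbb Q[\alpha]$ satisfies $T(\mu\alpha^k)=T(\lambda\alpha^{n_0+k})\in\mathbb Z$ for every $k\ge 0$, and in particular for $k=0,\ldots,d-1$; then $\lambda=\alpha^{-n_0}\mu$ realizes the stated form. Conversely, given $\mu\in\mathbb Q[\alpha]\setminus\{0\}$ with $T(\mu\alpha^j)\in\mathbb Z$ for $j=0,\ldots,d-1$, the recurrence (now with $a_d=\pm 1$) forward-propagates integrality to all $j\ge 0$, and then $\mu\alpha^n=T(\mu\alpha^n)-\sum_{i\ge 2}\sigma_i(\mu)\alpha_i^n$ is exponentially close to an integer, whence $\|\alpha^m\mu\cdot\alpha^j\|=\|\mu\alpha^{m+j}\|\to 0$ exponentially as $j\to\infty$ for every $m\in\mathbb Z$. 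The step I expect to be hardest is promoting $\alpha$ from algebraic to algebraic integer: the Galois analysis alone only constrains the moduli of the conjugates, and pinning down $|a_d|=1$ requires a genuinely nontrivial input such as Fatou's lemma, or equivalently Pisot's original argument on integer-valued linear recurrences.
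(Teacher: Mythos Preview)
Your argument is correct. The paper's own proof is much thinner than yours: it cites Cassels (\cite{cassels}, Chapter VIII, Theorem~1) for the hard direction---that $\Lambda_\alpha\neq\emptyset$ forces $\alpha$ to be PV and determines $\Lambda_\alpha$---and then only writes out the easy direction, checking that for $\lambda=\alpha^m\mu$ as in (\ref{Omega2}) the trace sequence $s(j)=T(\mu\alpha^j)$ obeys the companion recurrence, stays integral, and yields the exponential bound~(\ref{PVconv}). You have supplied a self-contained proof of the part the paper outsources: the Galois analysis of the eventually-integral recurrence $(m_n)$ to obtain $\lambda=c_1\in\mathbb Q[\alpha]$ and $|\alpha_i|<1$, and the Fatou-lemma step to force $a_d=\pm1$. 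Since the paper explicitly describes the Cassels/Pisot proof it cites as ``based on the properties of recursive sequences,'' your route is essentially that proof unpacked; what you gain is self-containment, and your identification of the integrality promotion $\alpha\in\mathbb A\Rightarrow\alpha\in\mathcal O$ as the genuine crux is exactly right---it is the one step that does not follow from the linear-algebra of the recurrence alone.
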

\begin{proof}
Cassels (\cite{cassels}, Chapter VIII, Theorem 1) gives a simplified version, based on the properties of recursive sequences,
of Pisot's proof in \cite{pisot}. We relaxed the assumption that $\alpha$ is positive since $\alpha$ is a PV number iff
$-\alpha$ is a PV number. The sequence $s(j) = T(\mu \alpha^j), j \geq 0$ satisfies
$s(j) = -c_{d-1} s(j-1) - \cdots - c_0 s(j-d), \ j \geq d,$
where
$P_{\alpha}(X) = X^{d} + c_{d-1} X^{d-1} + \cdots + c_0.$
Then $($\ref{Omega2}$)$ implies that $s$ has values in $\mathbb Z.$
If $\lambda = \alpha^m\mu$ and $\mu = \mu_1, \mu_2,...,\mu_{d}$ are the Galois conjugates of $\mu,$ then
\begin{equation}\label{PVconv}
  ||\lambda \alpha^j|| \leq |\mu \alpha^{j+m} - T(\mu \alpha^{j+m})|
  \leq \sum_{k=2}^{d} |\mu_k| \, |\alpha_k|^{j+m}, \ \ j \geq - m,
\end{equation}
converges to $0$ exponentially fast as $j \rightarrow \infty$ since $|\alpha_{k}| < 1, k = 2,...,d.$
\end{proof}
This paper studies {\it refinable functions}, nonzero
complex scalar or vector valued distributions $\varphi$ satisfying
a refinement equation
\begin{equation}\label{refinement}
  \varphi(x) = \sum_{k = 1}^{\infty} a(k) \varphi(\alpha x - \tau(k)),
\end{equation}
and whose Fourier transform $\widehat \varphi(y) = \int_{-\infty}^{\infty} f(x) e^{-2\p i xy} dx$
is continuous at $y = 0$ and $\widehat \varphi(0) \neq 0.$ Here the {\it dilation} $\alpha \in \mathbb R \, \backslash \, [-1,1],$ the
coefficient sequence $a,$ which is matrix valued for vector valued refinable functions, decays exponentially fast,
and $\tau$ takes values in $\mathbb Z[\alpha,\alpha^{-1}].$ Refinable functions constructed from integer $\alpha \geq 2$
and integer valued $\tau$ include Daubechies' scaling functions used to construct orthonormal wavelet bases \cite{daub}, basis functions
constructed by Cavaretta, Dahmen, and Michelli from stationary subdivision algorithms \cite{CDM}, and multiwavelets constructed from vector valued refinable functions \cite{ghm}.
$($\ref{refinement}$)$ is equivalent to
\begin{equation}\label{Fouriervarphi}
  \widehat \varphi(y) = \widehat a(y\alpha^{-1}) \widehat \varphi(y\alpha^{-1})
\end{equation}
\begin{equation}\label{Fouriera}
  \widehat a(y) = |\alpha|^{-1} \sum_{k = 1}^{\infty} a(k)e^{-2\pi i \tau(k)y}.
\end{equation}
For scalar valued $\varphi,$ $\widehat a(0) = 1,$ for vector valued $\varphi,$
$\widehat a(0) \, \widehat \varphi(0) = \widehat \varphi(0),$ and
\begin{equation}\label{product}
  \widehat \varphi(y\alpha^J) = \left( \prod_{j < J} \widehat a(y\alpha^{j}) \right) \widehat \varphi(0), \ \ J \in \mathbb Z,
\end{equation}
where for matrices the factors move right with decreasing $j.$
\begin{example}\label{boxcar}
We call $\varphi = \chi_{[0,1]}$ the boxcar function. It satisfies $($\ref{refinement}$)$ with $\alpha = 2,$
$a(0) = a(1) = 1,$ $\tau(0) = 0,$ $\tau(1) = 1.$ Then $\widehat a(y) = e^{\pi i y} \cos \pi y$ and
\begin{equation}
    \widehat \varphi(y2^J) =  e^{\pi i y2^J} \frac{\sin \pi y2^J}{\pi y2^J} = e^{\pi y 2^J} \prod_{j < J} \cos (\pi y 2^J).
\end{equation}
$\varphi$ is integrable and $\widehat \varphi$ vanishes at infinity, despite the fact that $\widehat \varphi(\pi \lambda 2^j)$ converges
to $1$ for every $\lambda \in \mathbb Z[2,\frac{1}{2}]$ $($the set of dyadic rational numbers$)$, because for every such $\lambda$ there exists $j \in \mathbb Z$ such that
$\cos (\pi \lambda 2^j) = 0.$
\end{example}
\begin{example}\label{dyadic}
$\alpha = 2,$ $a(k) = \tau(k) = 2^{1-k}, k \in \mathbb N$ gives
the {\it dyadic} function and
$\widehat a(y) = \sum_{k \in \mathbb N} 2^{-k} \exp(-2\pi i y2^{1-k})$
is Bohr almost periodic \cite{bohr} since
\begin{equation}\label{dyadicAP}
  |\widehat a(y + 2^{L-1}) - \widehat a(y)| \leq 2^{-L}, \ \ y \in \mathbb R, \ L \in \mathbb N.
\end{equation}
Furthermore,
$\inf_{y \in \mathbb R} |\widehat a(y)| > 0,$
and
$\lim_{j \rightarrow \infty} \widehat a(\lambda 2^j) = 1$
for every $\lambda \in \mathbb Z[2,\frac{1}{2}].$
Figure \ref{dyadicfig} shows the modulus of $\widehat a$ over $[0,128].$
$\widehat \varphi$ does not vanish at infinity since
$\lim_{j \rightarrow \infty} \widehat \varphi(2^j) \approx 0.2578 + 0.0692i.$
\end{example}
\begin{figure}
    \centering
    \includegraphics[width=5in]{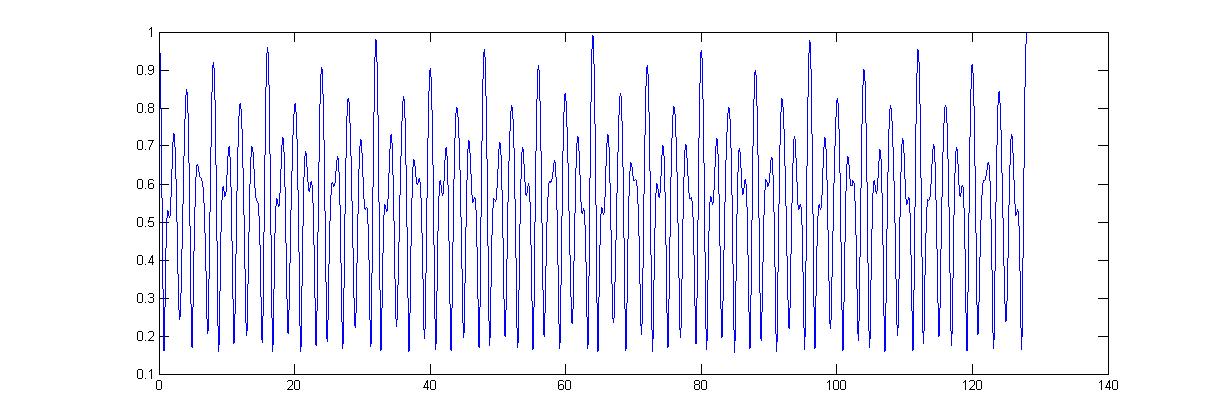}
    \caption{Fourier Transform of the Dyadic Refinement Sequence}
    \label{dyadicfig}
\end{figure}
\begin{example}\label{multi}
If $\alpha = \frac{1+\sqrt 5}{2}$ is the Golden Mean then
$\varphi = \left[ \begin{array}{c}
  \chi_{[0,\alpha^{-1}]} \\
  \chi_{[0,1]}
\end{array} \right]$
satisfies $($\ref{refinement}$)$ with
$a(1) = \left[ \begin{array}{cc}
          0 & 1 \\
          0 & 1
        \end{array} \right],
$
$a(2) = \left[ \begin{array}{cc}
          0 & 0 \\
          1 & 0
        \end{array} \right],
$
$\tau(1) = 0, \tau(2) = 1.$
\begin{equation}\label{multiFa}
  \widehat a(y) = \alpha^{-1} \left[ \begin{array}{cc}
    0 & 1 \\
    e^{-2\pi i y} & 1
        \end{array} \right], \ \ \widehat \varphi(0) = \left[ \begin{array}{c}
    \alpha^{-1} \\
    1
        \end{array} \right].
\end{equation}
$\widehat a(y)$ is never zero and satisfies
$\lim_{j \rightarrow \infty} \widehat a(\lambda \alpha^j) = \widehat a(0)$ for all $\lambda \in \mathbb Z[\alpha,\alpha^{-1}],$
and
\begin{equation}\label{multiprod}
  \widehat \varphi(y\alpha^J) = \left[ \begin{array}{c}
  e^{-\pi i y\alpha^{J-1}} \frac{\sin (\pi y \alpha^{J-1})}{\pi y \alpha^J} \\
  e^{-\pi i y\alpha^{J}} \frac{\sin (\pi y \alpha^{J})}{\pi y \alpha^J}
\end{array} \right] =
\left( \prod_{j < J} \alpha^{-1}
 \left[ \begin{array}{cc}
    0 & 1 \\
    e^{-2\pi i y \alpha^{j}} & 1
        \end{array} \right] \right)
 \widehat \varphi(0).
\end{equation}
\end{example}
This example is related to the wavelets with PV dilations constructed by Gazeau, Patera and
Spiridinov \cite{gazeauspatera}, \cite{gazeauspiridonov} and multiresolution analyses on quasicrystals \cite{lawton3}.
For $\lambda \in \mathbb Z[\alpha,\alpha^{-1}],$ $\widehat \varphi (\lambda \alpha^j)$ is close to the eigenspace of $\widehat a(0)$ with eigenvalue $\alpha; -\frac{1}{\alpha}$ for $j \approx -\infty; \infty.$ This fact makes $\phi$ integrable despite $\widehat a$ never vanishing, in contrast to scalar refinable functions with PV dilations discussed below.
\\ \\
Erd\"{o}s \cite{erdos} proved that if $\alpha$ is a PV number then the refinable function satisfying
\begin{equation}\label{Bernoulli}
  \varphi(x) = \frac{|\alpha|}{2}\varphi(\alpha x) + \frac{|\alpha|}{2}\varphi(\alpha x -1)
\end{equation}
(a Bernoulli convolution) is not integrable by showing that
\begin{equation}\label{BF}
  \widehat \varphi(\alpha^J) = e^{-\pi i \frac{\alpha^J}{\alpha-1}} \prod_{j < J} \cos (\pi \alpha^j)
\end{equation}
fails to converge to $0$ as $J \rightarrow \infty$ We give his proof.
Since $||\alpha^j||$ converges to $0$ exponentially fast as $|j| \rightarrow \infty,$
$\cos \pi \alpha^j$ converges to $\pm 1$ exponentially fast and hence
$\varphi(\alpha^J)$ converges to $0$ iff there exists $j \in \mathbb Z$ such that $\cos \pi \alpha^j = 0,$
or equivalently if $\alpha^j \in \frac{1}{2} + \mathbb Z.$ This is impossible because if $\alpha^j \in \mathbb Q$
then the Galois conjugates of $\alpha$ would be multiples of each other by roots of a cyclotomic polynomial and thus
have identical moduli.
Dai, Feng and Wang \cite{daifengwang} extended Erd\"{o}s' result to scalar valued
refinable functions $\varphi$ that satisfy $($\ref{refinement}$)$ where $a$ has finite support,
$\alpha$ is a PV number, and $\tau$ is integer valued. We give their proof. Since
$\tau$ is integer valued, $\widehat a$ in $($\ref{Fouriera}$)$ has period $1.$ Since $a$ has
finite support, $\widehat a$ is real analytic so the set of its zeros in $[0,1)$ is a finite set $F,$
so the set of zeros of $\widehat a$ equals $F + \mathbb Z.$ If $\widehat \varphi$ vanished at infinity
then for every $m \in \mathbb N,$ $\lim_{J \rightarrow \infty} \widehat \varphi(m \alpha^J) = 0$ and since
$||m\alpha^j||$ converges to $0$ exponentially fast as $|j| \rightarrow \infty,$ the same argument used to obtain
Erd\"{o}s' result implies that there exists $j_m \in \mathbb Z$ such that $m\alpha^{j_m} \in F + \mathbb Z.$
Let $d = d(\alpha).$ $($\ref{Oinclusion}$)$ implies that there exist unique $k_m \in \mathbb Z$ and
$\beta_m \in \alpha \mathbb Z + \cdots + \alpha^{d-1} \mathbb Z$ with
\begin{equation}\label{pos}
  m \alpha^{j_m} = m k_m + m \beta_m, \ \ j_m \geq 0
\end{equation}
and $($\ref{alphainclusion}$)$ implies that
\begin{equation}\label{neg}
  m \alpha^{j_m} = (N(\alpha)B(\alpha)))^{j_m} m k_m + (N(\alpha)B(\alpha)))^{j_m} m \beta_m, \ \ j_m < 0.
\end{equation}
Since $\alpha^{j_m} \notin \mathbb Q,$ $\beta_m \neq 0,$ choosing a prime $p$ that does not divide $N(\alpha)B(\alpha)$
and letting $m = p^n, n \in \mathbb N$ produces a set of values of $(N(\alpha)B(\alpha)))^{j_m} m \beta_m$ that are infinite
modulo $\mathbb Z$ (infinite modulo $\mathbb Q$ since $1,\alpha,...,\alpha^{d-1}$ are rationally
independent), contradicting the fact that $F$ is finite.
\\ \\
This argument holds under the weaker assumption that $a$ decays exponentially fast because then $\widehat a$ is a real analytic function with period $1.$ The remainder of this paper provides support for the following extension of these results:
\begin{conjecture}\label{conj}
  If $\phi$ is a scalar valued refinable function satisfying $($\ref{refinement},$)$
   $\alpha$ is a PV number, $a$ decays exponentially fast, and $\tau$ has values in $\mathbb Z[\alpha,\alpha^{-1}],$
  then $\widehat \varphi$ does not vanish at infinity and hence $\varphi$ is not integrable.
\end{conjecture}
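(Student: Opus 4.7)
The plan is to adapt the Dai--Feng--Wang argument by passing to the solenoid $S_\alpha$ Pontryagin-dual to the discrete ring $\mathbb{Z}[\alpha,\alpha^{-1}]$, and then to eliminate the would-be exceptional set of integers via the Erdős--Mahler and Odoni density bounds on integers represented by integral binary forms.

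First I would set up the solenoidal framework. Because $\tau$ takes values in $\mathbb{Z}[\alpha,\alpha^{-1}]$, the measure $|\alpha|^{-1}\sum_{k}a(k)\delta_{\tau(k)}$ is carried by the discrete additive group $\mathbb{Z}[\alpha,\alpha^{-1}]$, so its Fourier transform extends from the formula (\ref{Fouriera}) on $\mathbb{R}$ to a real-analytic function $\tilde a$ on the compact solenoid $S_\alpha=\widehat{\mathbb{Z}[\alpha,\alpha^{-1}]}$, with $\tilde a(0)=1$; the restriction of $\tilde a$ along the injective one-parameter subgroup $\iota\colon \mathbb{R}\hookrightarrow S_\alpha$ dual to the inclusion $\mathbb{Z}[\alpha,\alpha^{-1}]\hookrightarrow\mathbb{R}$ recovers $\widehat a$. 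Multiplication by $\alpha$ lifts to an automorphism of $S_\alpha$ that is hyperbolic with unstable direction $\iota(\mathbb{R})$: the PV condition makes the other archimedean Galois factors contracting, and since $\alpha\in\mathcal O$ has $|\alpha|_p\le 1$ at every non-archimedean place, the $p$-adic factors are contracting or isometric. Hence (\ref{product}) extends to $S_\alpha$, and for every $m\in\mathbb{Z}$ the two-sided product $\prod_{j\in\mathbb{Z}}\tilde a(\iota(m\alpha^{j}))$ converges absolutely: as $j\to-\infty$ by continuity of $\tilde a$ at $0$, and as $j\to+\infty$ by Theorem \ref{pisot}, which shows $\iota(m\alpha^{j})\to 0$ in $S_\alpha$ because $\|m\alpha^{j+k}\|\to 0$ for every fixed $k\in\mathbb{Z}$.

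Second, assume for contradiction that $\widehat\varphi$ vanishes at infinity. Specialized along $\{m\alpha^{J}\}_{J\to+\infty}$ for a fixed $m\in\mathbb{N}$, identity (\ref{product}) combined with the convergence just established forces the infinite product to equal $0$, so there exists an integer $j_m$ with $\tilde a(\iota(m\alpha^{j_m}))=0$. Expanding $m\alpha^{j_m}$ in the integral basis of $\mathcal{O}_\alpha$ provided by (\ref{Oinclusion}) and (\ref{alphainclusion}), and substituting the linear recursion for the trace sequence of $\alpha$ appearing in the proof of Theorem \ref{pisot}, converts this transcendental condition into a polynomial Diophantine one on the integer coordinates of $m\alpha^{j_m}$.

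Third, when $a$ has finite support $\tilde a$ is a Laurent polynomial in the characters of $S_\alpha$, so its zero locus lifts to a finite union of algebraic hypersurfaces on the universal cover; the substitution of the trace recursion reduces the equation cut out by any one of them to an integral binary form in two of the coordinates, and the set of integers $m$ representable by such a form has density $o(1)$ by Erdős--Mahler \cite{erdosmahler} in degree $>2$ and Odoni \cite{odoni} in degree $=2$. Since the representation would have to hold for every $m\in\mathbb{N}$, we obtain a contradiction. The main obstacle is removing the finite-support hypothesis: for merely exponentially decaying $a$, $\tilde a$ is real-analytic rather than polynomial and its zero set need not be algebraic, so the binary-form density bounds do not apply directly. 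The natural remedy is to approximate $\tilde a$ by its polynomial truncations and to exploit hyperbolicity of the dynamics to show that some single finite truncation already detects the zeros along the orbit of $\iota(m)$ up to an exponentially small error; producing a uniform degree bound for the truncated form, so that a single Diophantine density estimate applies uniformly in $m$, is where I expect the proof to become delicate.
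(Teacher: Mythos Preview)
The statement is a \emph{conjecture}; the paper does not prove it. The paper develops the solenoidal representation (your first step is essentially its Section~\ref{sol}), shows that if $\widehat\varphi$ vanished at infinity then $\mathcal S(A)$ would meet every homoclinic orbit (your second step; cf.\ Theorem~\ref{lowerbound} and the proof of Theorem~\ref{simple}), proves via Erd\H os--Mahler and Odoni that $\mathcal S(A)$ is never \emph{simple} (Theorems~\ref{norms} and~\ref{simple}), and then poses a second conjecture---that any real-analytic subset of $G$ meeting every homoclinic orbit must be simple---whose truth would close the argument. So the paper's own ``proof'' is conditional on an open geometric statement.

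Your third step is where the proposal breaks, and it already fails in the finite-support case you single out. The assertion that ``the substitution of the trace recursion reduces the equation cut out by any one of them to an integral binary form in two of the coordinates'' is unjustified: the zero locus of $\tilde a$ is an arbitrary real-algebraic hypersurface in a $d$-dimensional group, and there is no mechanism that turns the condition ``$m\alpha^{j_m}$ lies on this hypersurface for \emph{some} $j_m$'' into ``$m$ is a value of a fixed integral binary form''. The exponent $j_m$ is existentially quantified and varies with $m$; eliminating it does not yield a single form of bounded degree, and the hypersurface need not be a level set of any norm form. This is precisely the difficulty the paper isolates and repackages as its second conjecture rather than resolves.

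Note also that you and the paper invoke Erd\H os--Mahler and Odoni in opposite directions. Those results give \emph{lower} bounds on the number of integers represented, and the paper uses them in Theorem~\ref{norms} to show that $N(\mathcal S(\widehat a))$ is too rich for $\mathcal S(A)$ to be simple. Your argument instead needs that the represented integers have density $o(1)$; for degree $\geq 3$ that is the trivial upper bound, and for degree $2$ it is Landau--Ramanujan/Bernays rather than the cited theorems. So even with correct citations your density step would be vacuous until the missing reduction to a binary form is supplied---and the paper gives no indication that such a reduction exists.
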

\section{Solenoidal Representation}
\label{sol}
Let $\mathbb T^{\mathbb Z}$ be the group of functions $g : \mathbb Z \rightarrow \mathbb T$
under pointwise addition and the product topology. Tychonoff's theorem \cite{tychonoff} implies
that $\mathbb T^{\mathbb Z}$ is compact.
Define the {\it shift automorphism} $\sigma : \mathbb T^{\mathbb Z} \rightarrow \mathbb T^{\mathbb Z}$ by
%
  $(\sigma g)(j) = g(j+1),\ \ g \in \mathbb T^{\mathbb Z},\ j \in \mathbb Z,$
%
and homomorphisms $\rho_n : \mathbb T^{\mathbb Z} \rightarrow \mathbb T^n$ by
%
  $\rho_n(g) = [g(0),...,g(n-1)]^T, \ \ g \in \mathbb T^{\mathbb Z}, \ n \in \mathbb N.$
%
Let $\alpha$ be a PV number of degree $d = d(\alpha).$ Its Galois conjugates
$\alpha = \alpha_1,...\alpha_d$ are the roots of its
minimal polynomial $P_\alpha(X) = X^d + c_{d-1} X^{d-1} + \cdots + c_0 \in \mathbb Z[X].$
Define the Vandermonde $V$, Frobenius Companion $C$, and Diagonal $D$, matrices
$$
V = \left[\begin{array}{ccc}
     1                & \hdots &  1 \\
     \alpha_1         & \hdots & \alpha_{d} \\
     \vdots           & \hdots & \vdots \\
     \alpha_{1}^{d-1} & \hdots & \alpha_{d}^{d-1} \\
    \end{array}\right]
C = \left[\begin{array}{cccc}
        0 &  1 & \hdots & 0 \\
        \vdots & \ddots & \ddots &  0 \\
        0 &  \hdots & 0 & 1 \\
        -c_0 & \hdots & \hdots & -c_{d-1}
    \end{array}\right]
D = \left[\begin{array}{ccc}
        \alpha_1 & \hdots & 0 \\
        \vdots & \ddots & \vdots \\
        0 &  \hdots & \alpha_d
    \end{array}\right]
$$
$\det V = \prod_{i < j} (\alpha_j - \alpha_i) \neq 0 \Rightarrow V$ is invertible, $CV = VD,$ and $V^{-1}C = DV^{-1}.$
Let
$S_{-} = \{  [0,s_2,...,s_d]^T : s_j \in \mathbb C, \ \alpha_j = \overline \alpha_k \Rightarrow s_j = \overline s_k  \},$
$S_{+} = \{  [s_1,0,...,0]^T  :  s_1 \in \mathbb R  \},$ $S = S_{+} + S_{-}.$ These are real subspaces of $\mathbb C^d$ with
dimensions $d-1,1,d.$
Construct homomorphisms $\vartheta : S \rightarrow \mathbb T^\mathbb Z$ and $\theta : \mathbb R \rightarrow \mathbb T^\mathbb Z,$
\begin{equation}\label{vartheta}
   \vartheta(s)(j) = \sum_{k=1}^{d} s_k \, \alpha_k^j + \mathbb Z, \ \  s \in S, \ j \in \mathbb Z,
\end{equation}
\begin{equation}\label{theta}
   \theta(y)(j)= y \, \alpha^j + \mathbb Z , \ \ y \in \mathbb R, \ j \in \mathbb Z,
\end{equation}
and $\sigma$ invariant subgroups $M_{\pm} = \vartheta(S_{\pm}),$ $M = M_{+} + M_{-},$ and $G = \overline M.$
Then $DS_{\pm} = S_{\pm},$ $S = V^{-1}R^d,$ $M = \vartheta(S),$ $\rho_d(M) = \mathbb T^d,$
$\rho_d(G) = \mathbb T^d,$ and hence $G$ is a compact, connected, abelian group with dimension $\geq d,$
the restriction $\sigma : G \rightarrow G$ gives a dynamical system $(G,\sigma)$ that is an {\it extension} \cite{furstenberg}
of the dynamical system $(\mathbb T^d,C)$ since
$C \circ \rho_d = \rho_d \circ \sigma.$ $0 \in G$ is an equilibrium point of $\sigma$ and
$M_{+};M_{-}$ is the unstable;stable manifold of $0.$
Define $K =$ kernel of $\rho_d : G \rightarrow \mathbb T^d,$
$K_{-} = \bigcup_{j \in \mathbb Z} \sigma^j (K),$ $G_{+} = M_{+},$ $G_{-} = M_{-} + K_{-},$
and $H = G_{+} \cap G_{-}.$ Then $G = M + K = G_{+} + G_{-}$ and the orbits of points in $H$ are {\it homoclinic}
since $h \in H \Rightarrow \lim_{j \rightarrow \pm \infty} \sigma^j h = 0.$
\begin{remark}\label{rem1}
  Following Lind and Ward $($\cite{lindward}, p. 411$)$ we define a {\it solenoid} to be a compact, connected, dimension
$n < \infty$ abelian group. Its dual group $($see Section \ref{app}$)$ is a discrete, torsion free,
rank $n$ abelian group, or equivalently, a subgroup of $\mathbb Q^n.$ The dynamical system $(G,\sigma)$
is {\it expansive}, as defined by Lam \cite{lam}, if there exists a neighborhood $U$ of $0$ such that
$\bigcap_{j \in \mathbb Z} \sigma^j(U) = \{0\}.$
In \cite{lawton1} we used {\it topological entropy} and Pontryagin duality
to prove that every group that admits an expansive automorphism is a
solenoid. Schmidt (\cite{schmidt}, Chap. III) used algebraic methods
to characterize more general expansive transformation groups.
\end{remark}
\begin{theorem}\label{Gl}
The assumptions above imply that:
\begin{enumerate}
    \item $G$ is isomorphic to a group extension of $\mathbb T^d$ by $K.$
    \item If $|c_0| = 1$ then $K = \{0\}$ and $G$ is isomorphic to $\mathbb T^d.$
    \item If $|c_0| > 1$ then $K$ is homeomorphic to Cantor's set.
    \item $G$ is a $d$-dimensional solenoid and $(G,\sigma)$ is expansive.
    \item $\theta(\mathbb R) = \vartheta(S_{+})$ is dense in $G.$
    \item $\theta(\Lambda_\alpha) = H.$
    \end{enumerate}
\end{theorem}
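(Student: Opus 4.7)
The plan is to address the six claims by exploiting the polynomial $P_\alpha$ through Pontryagin duality. Claim (1) is immediate from $\rho_d(G)=\mathbb{T}^d$ and $K=\ker\rho_d|_G$, giving the short exact sequence $0\to K\to G\to\mathbb{T}^d\to 0$ of compact abelian groups.

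I would treat (5) next, since it also yields (2)--(4). A Laurent polynomial $\lambda\in\mathbb{Z}[X,X^{-1}]=\widehat{\mathbb{T}^{\mathbb{Z}}}$ annihilates $M=\vartheta(S)$ iff $\sum_k s_k\lambda(\alpha_k)\in\mathbb{Z}$ for every $s\in S$; this is a real-linear functional on $S\cong\mathbb{R}^d$, so the only integer-valued possibility is $0$, and invertibility of $V$ then forces $\lambda(\alpha_k)=0$ for every Galois conjugate, i.e.\ $P_\alpha\mid\lambda$ (Gauss's lemma and monicity of $P_\alpha$ in $\mathbb{Z}[X]$ promote this to divisibility in $\mathbb{Z}[X,X^{-1}]$). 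Hence $\hat G\cong\mathbb{Z}[X,X^{-1}]/(P_\alpha)\cong\mathbb{Z}[\alpha,\alpha^{-1}]$. The character attached to $\lambda$ sends $\theta(y)\mapsto y\lambda\bmod\mathbb{Z}$, which vanishes for all $y\in\mathbb{R}$ iff $\lambda=0$ in $\mathbb{Z}[\alpha,\alpha^{-1}]\subset\mathbb{R}$; thus no non-trivial character of $G$ annihilates $\theta(\mathbb{R})$, proving (5). Since $\rho_d^*$ embeds $\widehat{\mathbb{T}^d}$ as $\mathbb{Z}[\alpha]\subset\mathbb{Z}[\alpha,\alpha^{-1}]$, we get $\hat K\cong\mathbb{Z}[\alpha,\alpha^{-1}]/\mathbb{Z}[\alpha]$. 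If $|c_0|=1$, then $\alpha$ is a unit and $\alpha^{-1}\in\mathbb{Z}[\alpha]$, so $\hat K=0$ and $K=\{0\}$, giving (2). If $|c_0|>1$, the norm computation $N(\alpha^{-n})=N(\alpha)^{-n}=(\pm c_0)^{-n}\notin\mathbb{Z}$ rules out $\alpha^{-n}\in\mathcal{O}\supseteq\mathbb{Z}[\alpha]$ for $n\geq 1$, and the parallel argument (the divisibility $\alpha\mid 1-\alpha^{m-n}$ in $\mathcal{O}$ would force $\alpha$ to be a unit) shows the classes of $\alpha^{-n}$ are all distinct in $\hat K$. Thus $\hat K$ is an infinite torsion group, so $K$ is an infinite compact metrizable totally disconnected abelian group, necessarily perfect (a compact group with an isolated point is finite), hence homeomorphic to Cantor's set, proving (3).

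For (4), $\hat G$ is torsion-free of $\mathbb{Z}$-rank $d$ (embedded in the field $\mathbb{Q}[\alpha]$), so $G$ is a $d$-dimensional connected compact abelian group, i.e.\ a solenoid of dimension $d$. Expansivity follows by taking $U=\{g\in G:\|g(0)\|<\varepsilon\}$ with $\varepsilon(1+|c_{d-1}|+\cdots+|c_0|)<1$: any $g\in\bigcap_j\sigma^j U$ lifts to $\tilde g:\mathbb{Z}\to(-\varepsilon,\varepsilon)$, and the recurrence value $\tilde g(j+d)+c_{d-1}\tilde g(j+d-1)+\cdots+c_0\tilde g(j)$ is simultaneously an integer and of modulus $<1$, hence $0$; so $\tilde g$ is a bounded real solution of the recurrence, but since $|\alpha|>1$ and $|\alpha_k|<1$ for $k\geq 2$ every non-zero real solution is unbounded on $\mathbb{Z}$ in one direction or the other, forcing $\tilde g\equiv 0$ and $g=0$. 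For (6), given $\lambda=\alpha^m\mu\in\Lambda_\alpha$ let $s\in S_-$ have $s_1=0$ and $s_k=-\mu_k\alpha_k^m$ for $k\geq 2$ (the Galois-conjugacy constraint is automatic). A direct computation gives $(\theta(\lambda)-\vartheta(s))(j)=T(\mu\alpha^{j+m})\bmod\mathbb{Z}$, which vanishes for $j\geq -m$ by $(\ref{Omega2})$; hence $\theta(\lambda)-\vartheta(s)\in\sigma^m(K)\subseteq K_-$ and $\theta(\lambda)\in M_+\cap(M_-+K_-)=H$. Conversely, every $h\in H$ has the form $h=\theta(y)=\vartheta(s_-)+k_-$ with $s_-\in S_-$, $k_-\in K_-$; for large $j$ the second summand eventually vanishes while the first tends to $0$ exponentially in $\mathbb{T}$ (by $|\alpha_k|<1$), so $\|y\alpha^j\|\to 0$ and $y\in\Lambda_\alpha$.

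The main obstacle is the duality computation in (5): identifying the annihilator of $G$ in $\mathbb{Z}[X,X^{-1}]$ precisely as the principal ideal $(P_\alpha)$ requires both the real-linear vanishing argument (using invertibility of $V$ and the rigidity of integer-valued continuous functionals on $\mathbb{R}^d$) and Gauss's lemma combined with monicity of $P_\alpha$ to promote divisibility from $\mathbb{Q}[X]$ to $\mathbb{Z}[X,X^{-1}]$. A secondary subtlety is the ideal-theoretic argument in (3) showing that the classes of $\alpha^{-n}$ are all distinct in $\mathbb{Z}[\alpha,\alpha^{-1}]/\mathbb{Z}[\alpha]$ when $|c_0|>1$, so that $\hat K$ is genuinely infinite.
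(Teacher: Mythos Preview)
Your proof is correct and takes a genuinely different route from the paper's. The paper argues each item more directly: for (2)--(3) it reads off $K$ from the recurrence $P_\alpha(\sigma)g=0$, obtaining the explicit description $K=\{g:g(k)=0\text{ for }k\geq 0,\ c_0^{-k}g(k)=0\text{ for }k<0\}$; for (4) it simply invokes the decomposition $G=G_{+}+G_{-}$; for (5) it observes that $\theta(\mathbb R)$ is $\sigma$-invariant and projects under $\rho_d$ to a Kronecker--Weyl line in $\mathbb T^d$; and (6) is a direct computation with Theorem~\ref{pisot}. By contrast, you compute the Pontryagin dual once and for all, identifying $\widehat G\cong\mathbb Z[X,X^{-1}]/(P_\alpha)\cong\mathbb Z[\alpha,\alpha^{-1}]$ via the annihilator argument, and then read off (2)--(5) from this single algebraic object: $\widehat K\cong\mathbb Z[\alpha,\alpha^{-1}]/\mathbb Z[\alpha]$ immediately gives (2) and (3), torsion-freeness and rank of $\mathbb Z[\alpha,\alpha^{-1}]\subset\mathbb Q[\alpha]$ give the solenoid statement in (4), and the density in (5) becomes the triviality that $y\mapsto y\lambda$ is integer-valued on $\mathbb R$ only when $\lambda=0$. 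Your expansivity argument (lifting to a bounded real solution of the linear recurrence and using hyperbolicity of $C$) is also more explicit than the paper's one-line appeal to $G=G_{+}+G_{-}$. The paper's approach is lighter on machinery and stays closer to the concrete dynamics; yours is more structural and makes transparent why the constant $c_0=\pm N(\alpha)$ governs the dichotomy in (2)--(3). A minor quibble: your parenthetical ``$\alpha\mid 1-\alpha^{m-n}$'' in (3) is stated awkwardly since $\alpha^{m-n}\notin\mathcal O$ a priori, but the intended deduction (that $\alpha^{-m}-\alpha^{-n}\in\mathbb Z[\alpha]$ with $m<n$ rearranges to $\alpha\mid 1$ in $\mathbb Z[\alpha]$, forcing $\alpha$ to be a unit) is correct.
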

\begin{proof}
(1) holds since $\rho_d : G \rightarrow \mathbb T^d$ is surjective.
(2-3) hold since $($\ref{vartheta}$)$ $\Rightarrow$ every $g \in G_0,$
hence every $g \in G,$ satisfies
$$
  (P_{\alpha}(\sigma)g)(k) = c_0g(k) + \cdots + c_{d-1}g(k+d - 1) + g(k+d) = 0, \ \ k \in \mathbb Z,
$$
so $K = \{ \, g \in \mathbb T^\mathbb Z \, : \, g(k) = 0, k \geq 0 \mbox{ and } c_{0}^{-k}g(k) = 0, k < 0\}.$
(4) holds since $G = G_{+} + G_{-}.$
(5) holds since $(\sigma \theta(x))(j) = \theta(x)(j+1) = \theta(x\alpha) \Rightarrow \theta(\mathbb R)$ is $\sigma$-invariant
and $\rho_d(\theta(\mathbb R)) = R [1,\alpha,....,\alpha^{d-d}]^T + \mathbb Z^d$ is dense in $\mathbb T^d$ by the Kronecker-Weyl theorem \cite{weyl},(Appendix, Theorem \ref{Weyl}) since the entries of $[1,\alpha,...,\alpha^{s-1}]^T$ are rationally independent.
(6) holds since $\theta(s_1) \in H \Leftrightarrow \theta(s_1) \in M_{-} + K_{-}$
iff there exist $-[0,s_2,...,s_d]^T \in S_{-}, m \in \mathbb Z$ with
$\theta(s_1) + \vartheta([0,s_2,...,s_d]^T)\in \sigma^{m}K
\Leftrightarrow \rho_d(\sigma^{-m}\vartheta(s)) = 0$ where $s = [s_1,s_2,...,s_d]^T.$
Theorem \ref{pisot} implies that this condition holds iff
$VD^{-m}s = [T(\alpha^{-m} s_1),...,T(\alpha^{-m+d-1}s_1)]^T \in \mathbb Z^d \Leftrightarrow
s_1 \in \Lambda_\alpha.$
\end{proof}
Assume that $\alpha$ is a PV number of degree $d = d(\alpha),$ $a : \mathbb N \rightarrow \mathbb C$ decays exponentially fast, $\sum_{k \in \mathbb N} a(k) = 1,$ and $\tau \, : \, \mathbb N \rightarrow \mathbb Z[\alpha,\alpha^{-1}].$ For $k \in \mathbb N$ define
$\tau_k : \mathbb Z \rightarrow \mathbb Z$
such that
$
  \tau(k) = \sum_{j \in \mathbb Z} \tau_k(j)\alpha^j.
$
Each $\tau_k$ is finitely supported.
Construct $A : G \rightarrow \mathbb C$
\begin{equation}\label{A}
  A(g) = |\alpha|^{-1} \sum_{k \in \mathbb N} a(k) \exp \left(-2\pi i\sum_{j \in \mathbb Z} \tau_k(j) g(k)\right), \  \ g \in G.
\end{equation}
\begin{theorem}\label{solrep}
  These assumptions give the solenoidal representation $\widehat a = A \circ \theta.$
\end{theorem}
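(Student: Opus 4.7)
The plan is to verify the identity by a direct substitution after first checking that the right-hand side $A(g)$ makes sense for all $g \in G$. For well-definedness, I would note that $\tau_k$ is finitely supported and $\mathbb{Z}$-valued, so the inner sum $\sum_{j \in \mathbb{Z}} \tau_k(j)\, g(j)$ is a finite $\mathbb{Z}$-linear combination of elements $g(j) \in \mathbb{T}$ and hence an unambiguous element of $\mathbb{T} = \mathbb{R}/\mathbb{Z}$; the exponential of $-2\pi i$ times a $\mathbb{T}$-element then lives in $\mathbb{T}_c$. Each term of the outer series has modulus at most $|a(k)|/|\alpha|$, so the assumption that $a$ decays exponentially gives absolute and uniform convergence, which incidentally shows that $A$ is continuous on $G$ equipped with the product topology.

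Then I would substitute $g = \theta(y)$, using (\ref{theta}) to replace $\theta(y)(j)$ by $y\alpha^j + \mathbb{Z}$. The inner sum becomes
$$
\sum_{j \in \mathbb{Z}} \tau_k(j)\bigl(y\alpha^j + \mathbb{Z}\bigr) \;=\; y \sum_{j \in \mathbb{Z}} \tau_k(j) \alpha^j + \mathbb{Z} \;=\; y\,\tau(k) + \mathbb{Z},
$$
where the integer ambiguity is absorbed into $\mathbb{Z}$ precisely because $\tau_k(j) \in \mathbb{Z}$, and the defining relation $\tau(k) = \sum_j \tau_k(j) \alpha^j$ collapses the remaining sum. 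Consequently the inner exponential equals $e^{-2\pi i y \tau(k)}$, and
$$
A(\theta(y)) = |\alpha|^{-1} \sum_{k \in \mathbb{N}} a(k)\, e^{-2\pi i \tau(k) y},
$$
which is exactly $\widehat{a}(y)$ by (\ref{Fouriera}).

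I do not expect any serious obstacle, since the statement is essentially an unpacking of notation. The one point that requires care is the bookkeeping between $\mathbb{R}$-valued, $\mathbb{Z}$-valued, and $\mathbb{T}$-valued quantities: one must use that $\tau_k$ is integer-valued so that $\tau_k(j)\, g(j)$ is meaningful as an element of $\mathbb{T}$ without choosing a lift to $\mathbb{R}$, and one must exploit the same integrality when passing from $y\alpha^j + \mathbb{Z}$ to $y\tau(k) + \mathbb{Z}$. As a consequence of the computation, continuity of $A$ combined with density of $\theta(\mathbb{R}) = \vartheta(S_+)$ in $G$ (Theorem \ref{Gl}(5)) shows that $A$ is the unique continuous extension of $\widehat{a} \circ \theta^{-1}$ to the solenoid, which justifies calling (\ref{A}) the solenoidal representation of $\widehat{a}$.
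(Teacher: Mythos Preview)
Your proof is correct and follows exactly the paper's approach, which simply cites the definitions (\ref{Fouriera}) and (\ref{theta}); you have merely written out the substitution that the paper leaves implicit. (In doing so you also silently correct what appears to be a typo in (\ref{A}), where the inner argument $g(k)$ should read $g(j)$.)
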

\begin{proof}
Follows from $($\ref{Fouriera}$)$ and $($\ref{theta}$).$
\end{proof}
\section{Zero Sets}
\label{zero}
This section develops relationships between and properties of the zero sets
$\mathcal S(\widehat \phi),$ $\mathcal S(\widehat a),$ and $\mathcal S(A).$
Since $a$ has exponential decay $A$ is a real-analytic function so $\mathcal S(A)$ is a real-analytic set.
Theorem \ref{solrep} implies $\theta(\mathcal S(\widehat a)) \subset \mathcal S(A)$ and
$($\ref{Fouriervarphi}$)$ implies that
\begin{equation}
\label{SFphi}
    \mathcal S(\widehat \varphi) = \alpha \mathcal S(\widehat a) + \alpha \mathcal S(\widehat \varphi),
\end{equation}
($+$ of zero multiplicities)
so the lower $\underline d$ and upper $\overline d$ asymptotic densities satisfy
\begin{equation}\label{SFphi}
    \underline d(\mathcal S(\widehat a)) = (|\alpha|-1)\, \underline d(S(\widehat \varphi)) \leq
    \overline d(\mathcal S(\widehat a)) = (|\alpha|-1)\, \overline d(S(\widehat \varphi)).
\end{equation}
\begin{theorem}
\label{upperbound}
    If $A$ is not zero then $\overline d(\mathcal S(\widehat a)) < \infty.$
\end{theorem}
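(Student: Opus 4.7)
The plan is to use the solenoidal representation $\widehat{a} = A \circ \theta$ of Theorem~\ref{solrep}, together with compactness of $G$, to produce a local zero-count bound on $\widehat{a}$ that is uniform in position. Such a bound translates directly into a finite upper asymptotic density for $\mathcal{S}(\widehat{a})$.

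For each $g \in G$ I would set $F_g(y) := A(g + \theta(y))$. Then $F_g : \mathbb{R} \to \mathbb{C}$ is real-analytic, and is not identically zero: otherwise $A$ would vanish on the dense translate $g + \theta(\mathbb{R})$ (density by Theorem~\ref{Gl}(5)), hence on all of $G$ by continuity, contradicting the hypothesis $A \not\equiv 0$. In particular each $F_g$ has isolated zeros.

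The core of the argument is the following uniform local claim: there exist $\eta > 0$ and $N \in \mathbb{N}$ such that for every $g \in G$ the function $F_g$ has at most $N$ zeros in $(-\eta, \eta)$. I would argue by contradiction and compactness. If the claim fails, pick $g_n \in G$ and $\eta_n \downarrow 0$ with $F_{g_n}$ having $N_n \to \infty$ zeros in $(-\eta_n, \eta_n)$, and by compactness of $G$ pass to a subsequence with $g_n \to g^* \in G$. For every fixed $m \in \mathbb{N}$ and every $n$ large enough that $N_n > m$, iterated Rolle's theorem applied to the real-analytic real-valued functions $\operatorname{Re} F_{g_n}$ and $\operatorname{Im} F_{g_n}$, each of which vanishes at each of the $N_n$ zeros of $F_{g_n}$, produces points $y_n^{(m)}, z_n^{(m)} \in (-\eta_n, \eta_n)$ at which $\operatorname{Re} F_{g_n}^{(m)}$ and $\operatorname{Im} F_{g_n}^{(m)}$ respectively vanish; both tend to $0$ as $n \to \infty$. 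Differentiating the series (\ref{A}) term by term in the $\theta$-direction gives
\[
F_g^{(m)}(y) \;=\; |\alpha|^{-1}\sum_{k \in \mathbb{N}} a(k)\,(-2\pi i\,\tau(k))^m\,\chi_k(g)\,e^{-2\pi i \tau(k) y}, \qquad \chi_k(g) = \exp\!\Bigl(-2\pi i \sum_{j} \tau_k(j)\, g(j)\Bigr),
\]
which is absolutely and uniformly convergent on compact subsets of $G \times \mathbb{R}$ provided $\sum_k |a(k)|\,|\tau(k)|^m < \infty$ for every $m$ --- precisely the summability condition that underpins the real-analyticity of $A$. Hence $F_g^{(m)}$ is jointly continuous in $(g, y)$, and passing $n \to \infty$ yields $\operatorname{Re} F_{g^*}^{(m)}(0) = 0 = \operatorname{Im} F_{g^*}^{(m)}(0)$ for every $m \geq 0$. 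The Taylor series of $F_{g^*}$ at $0$ thus vanishes, so $F_{g^*} \equiv 0$ by real-analyticity, contradicting the previous paragraph.

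Once the uniform local claim is in hand, the density bound is immediate: since $\widehat{a}(c + y) = F_{\theta(c)}(y)$ for every $c$, the claim yields $|\mathcal{S}(\widehat{a}) \cap (c - \eta, c + \eta)| \leq N$ for every $c \in \mathbb{R}$, and covering $[0, T]$ by $\lceil T/(2\eta)\rceil$ such intervals gives $|\mathcal{S}(\widehat{a}) \cap [0, T]| \leq N \lceil T/(2\eta)\rceil$, so $\overline{d}(\mathcal{S}(\widehat{a})) \leq N/(2\eta) < \infty$. The principal obstacle is the uniform local claim itself: in the purely real-analytic category, zeros can proliferate under arbitrarily small $C^0$ perturbations, so Hurwitz-style stability is unavailable; the Rolle-iteration route is designed to transfer a failure of the zero bound into a vanishing condition on \emph{all} Taylor coefficients, which is a rigid condition preserved under pointwise limits.
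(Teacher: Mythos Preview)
Your proof is correct and follows essentially the same strategy as the paper's: use compactness of $G$ to pass to a limit point $g^*$, then Rolle iteration together with real-analyticity to force $F_{g^*}\equiv 0$, contradicting $A\not\equiv 0$ via the density of $\theta(\mathbb R)$ in $G$. The paper packages the compactness step through the Hausdorff topology on the arcs $\theta([x_n,x_n+1])\subset G$ and works with the single real-valued function $|A(\theta(\cdot)+g)|^2$ instead of treating $\operatorname{Re}$ and $\operatorname{Im}$ separately, but these are cosmetic differences rather than a different route.
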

\begin{proof}
$\mathcal H(G) = \{\mbox{ closed subsets of } G \, \}$ with the Hausdorff topology is a compact
space (\cite{fell}, p. 205, 255) and $\mathcal T(G) = \{\theta([0,1]) + g:g\in G\}$
is a closed subset and hence compact. If $\overline d(\mathcal S(\widehat a)) = \infty$
then there exist $g \in G$ and a sequence $x_n \in \mathbb R, \ n \in \mathbb N$ such that $A$ has
at least $n$ zeros in $\theta([x_n,x_n+1])$ and $\lim_{n \rightarrow \infty} \theta(x_n) = g.$ Define
$f(x) = |A(\theta(x) + g)|^2, \ x \in \mathbb R.$ Then either $f$ has an infinite number of
zeros in the interval $[0,1]$ or Rolle's theorem implies that it has a zero of infinite order in $[0,1].$ Since
$A$ and hence $f$ is real-analytic, $f = 0.$ Since $\theta(\mathbb R),$ and hence
$\theta(\mathbb R) + g,$ is dense in $G,$ $A$ is zero thus giving a contradicting.
\end{proof}
For $m \in \mathbb Z$ and $\epsilon = [\epsilon_2,...,\epsilon_d]$ satisfying $\epsilon_k > 0$ and $\alpha_j = \overline \alpha_k \Rightarrow \epsilon_j = \epsilon_k$ define
\begin{equation}\label{Udef}
  U(m,\epsilon) = \sigma^m \, K + \{ \, \vartheta(s) \, : \, s \in S_{-} \mbox{ and } |s_k| < \epsilon_k, \ k = 2,...,d \, \} \subset G_{-}.
\end{equation}
Since $U(m,\epsilon)$ decreases to $\{0\}$ as $m \rightarrow \infty$ and $\epsilon_k \rightarrow 0$ we can, and will, choose
$m$ and $\epsilon$ so that $A$ never vanishes on $U(m,\epsilon).$ Also, $U(m,\epsilon)$ is $\sigma$ invariant since
$\sigma U(m,\epsilon) = U(m+1,[\epsilon_2|\alpha_2|,...,\epsilon_d|\alpha_d|]) \subset U(m,\epsilon).$
Let $a$ be the number of real $\alpha_k, k = 2,...,d$ and $b$ be the number of complex conjugate pairs of $\alpha_k, k = 2,...,d,$
and let $\gamma = |\det V|\, |c_0|^{-m} \, 2^a \pi^b\, \epsilon_1 \cdots \epsilon_d.$
For $L > 0$ define
$$W(L) = \{ \, VD^{-m}[y,s_2,...,s_d]^T \, : \, y \in (-L,L), |s_k| < \epsilon_k, \, k = 2,...,d \, \},$$
$$Y(L) = \{ \, y \in (-L,L) \, : \, \theta(y) \in U(m,\epsilon) \, \}.$$
The sets $W(L), \, L > 0$ are convex cylinders parallel to the vector $[1,\alpha,...,\alpha^{d-1}]^T$  whose entries are rationally
independent, therefore
\begin{equation}
\label{card}
    \lim_{L \rightarrow \infty} \frac{1}{2L} \mbox{ card } W(L) \cap \mathbb Z^d = \frac{1}{2L} \mbox{vol } W(L) = \gamma.
\end{equation}
\begin{lemma}
\label{bijection}
\label{same}
The $\mathbb R$-linear function $\xi:W(L)\, \cap\, \mathbb Z^d \rightarrow (-L,L)$ defined by
\\ $\xi(w) = [1,0,...,0]\,  D^mV^{-1}w$ is a bijection onto $Y(L).$ Therefore
\begin{equation}\label{Y}
 \lim_{L \rightarrow \infty} \frac{1}{2L} \mbox{ card } Y(L) = \gamma.
\end{equation}
\end{lemma}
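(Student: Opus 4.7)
The plan is to view $\xi$ as extracting the first component of the $\mathbb{R}$-linear isomorphism $D^m V^{-1} : \mathbb{R}^d \to \mathbb{R}^d$ and to show that the integrality condition $w \in \mathbb{Z}^d$ translates precisely into the membership condition $\theta(y) \in U(m,\epsilon).$ Once the bijection is established, the density equality $(\ref{Y})$ follows immediately from $(\ref{card})$ since $\mathrm{card}\,Y(L) = \mathrm{card}(W(L) \cap \mathbb{Z}^d).$

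The first step is to write $w = VD^{-m}[y,s'_2,\ldots,s'_d]^T$ componentwise: the $i$-th component of $w$ equals $y\alpha^{i-1-m} + \sum_{k=2}^d s'_k \alpha_k^{i-1-m},$ so $w \in \mathbb{Z}^d$ amounts to the $d$ scalar conditions $y\alpha^j + \sum_{k=2}^d s'_k \alpha_k^j \in \mathbb{Z}$ for $j = -m,\ldots,d-1-m.$ Setting $s_k := -s'_k$ so that $s=[0,s_2,\ldots,s_d]^T \in S_-$ with $|s_k|<\epsilon_k,$ these conditions say that $g := \theta(y) - \vartheta(s),$ an element of $G,$ satisfies $g(j)=0$ in $\mathbb{T}$ on $d$ consecutive integers. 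Because $G$ satisfies the recurrence $P_\alpha(\sigma)g = 0$ (used in the proof of Theorem \ref{Gl}), such vanishing is equivalent to $\sigma^{-m}g \in K,$ i.e.\ $g \in \sigma^m K,$ and hence $\theta(y) = g + \vartheta(s) \in U(m,\epsilon).$ This shows $\xi$ maps $W(L)\cap \mathbb{Z}^d$ into $Y(L).$ Surjectivity follows by reversing the argument: given $y \in Y(L),$ the definition of $U(m,\epsilon)$ supplies $h \in K$ and $s \in S_-$ with $|s_k|<\epsilon_k$ and $\theta(y) = \sigma^m h + \vartheta(s);$ evaluating at $j = -m,\ldots,d-1-m$ where $\sigma^m h$ vanishes, and setting $w := VD^{-m}[y,-s_2,\ldots,-s_d]^T,$ produces an integer vector in $W(L)$ with $\xi(w)=y.$

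The main obstacle I anticipate is injectivity. Two preimages $w_1,w_2$ of the same $y$ would differ by an integer vector $w_1 - w_2 = VD^{-m}[0,t_2,\ldots,t_d]^T$ with $|t_k|<2\epsilon_k.$ The image of the box $\{[0,t_2,\ldots,t_d]^T \in S_- : |t_k|<2\epsilon_k\}$ under $VD^{-m}$ is a bounded subset of a proper $(d-1)$-dimensional real subspace of $\mathbb{R}^d$ and shrinks to $\{0\}$ as $\epsilon \to 0.$ I would therefore shrink $\epsilon$ further, on top of the earlier constraint that $A$ not vanish on $U(m,\epsilon),$ so that this image contains no nonzero integer points; both are small-$\epsilon$ requirements and hence mutually compatible. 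This secures injectivity, completes the bijection $\xi : W(L)\cap \mathbb{Z}^d \to Y(L),$ and delivers $(\ref{Y})$ via $(\ref{card}).$
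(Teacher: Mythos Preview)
Your argument for ``maps into $Y(L)$'' and surjectivity is essentially the same as the paper's: both unwind the condition $w\in\mathbb Z^d$ into $\rho_d(\sigma^{-m}\vartheta([y,s_2,\dots,s_d]^T))=0$, i.e.\ $\sigma^{-m}g\in K$, and read off $\theta(y)\in U(m,\epsilon)$.

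The genuine difference is in injectivity. You propose to shrink $\epsilon$ so that the bounded set $VD^{-m}\{[0,t_2,\dots,t_d]^T:|t_k|<2\epsilon_k\}$ misses $\mathbb Z^d\setminus\{0\}$. This is correct and, as you note, compatible with the earlier smallness constraint on $\epsilon$; but it changes the constants $(\epsilon,\gamma)$ fixed before the lemma, so strictly you are proving the lemma for a possibly smaller $\epsilon$ (harmless for the application in Theorem~\ref{lowerbound}). The paper instead observes that no shrinking is needed at all: if $\xi(w)=0$ then $w\in VD^{-m}S_-$, the contracting eigenspace of the integer matrix $C$, so $C^\ell w\to 0$ while $C^\ell w\in\mathbb Z^d$, forcing $w=0$. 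Thus the paper shows the stronger structural fact $VD^{-m}S_-\cap\mathbb Z^d=\{0\}$, which gives injectivity for every $\epsilon$ and keeps the originally chosen $\gamma$. Your route is valid but weaker; the paper's dynamical argument is what you would want to insert in place of the $\epsilon$-shrinking step.
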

\begin{proof}
Assume $w \in W(L)\, \cap \, \mathbb Z^d.$ If $\xi(w) = 0$ then $w \in VD^{-m}S_{-}$ and hence
$\lim_{\, \ell \rightarrow \infty} C^\ell w = 0.$ Since $w \in \mathbb Z^d,$ $w = 0,$ so
$\xi$ is injective. Assume that $y \in (-L,L).$ Then $\theta(y) \in U(m,\epsilon)$ iff
there exist $-s_k$ that satisfy $|s_k| < \epsilon_k, k = 2,...,d$ and
$$\theta(y) \in \sigma^j K + \vartheta([0,-s_2,...,-s_d]^T) \Leftrightarrow
\sigma^{-m}\vartheta([y,s_2,...,s_d]^T) \in K$$
$$\Leftrightarrow \rho_d(\sigma^{-m}\vartheta([y,s_2,...,s_d]^T)) = 0 \Leftrightarrow
VD^{-m}[y,s_2,...,s_d]^T \in \mathbb Z^d.$$
Since $VD^{-m}[y,s_2,...,s_d]^T \in W(L),$ the last inclusion holds iff $y \in \xi(W(L) \cap \mathbb Z^d).$
This shows that $\xi$ maps $W(L) \cap \mathbb Z^d$ onto $Y(L).$ Then $(\ref{card}) \Rightarrow (\ref{Y}).$
\end{proof}
\begin{theorem}
\label{lowerbound}
  If $\widehat \varphi$ vanishes at infinity then $\underline d(\mathcal S(\widehat \varphi)) \geq \gamma.$
\end{theorem}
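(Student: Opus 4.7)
The plan is to show that $Y(L) \subseteq \mathcal S(\widehat \varphi) \cap (-L,L)$, so that Lemma \ref{bijection} immediately furnishes the required lower bound $\gamma$ on $\underline d(\mathcal S(\widehat \varphi))$.

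The key observation is that for every $y \in Y(L)$ the forward orbit $\{\sigma^j \theta(y)\}_{j \geq 0}$ remains in $U(m,\epsilon)$, by the invariance $\sigma U(m,\epsilon) \subseteq U(m,\epsilon)$ noted just after $($\ref{Udef}$)$, and moreover converges to $0 \in G$ exponentially fast. Indeed, decomposing $\theta(y) = \sigma^m k + \vartheta(s)$ with $k \in K$ and $s \in S_-$ satisfying $|s_j| < \epsilon_j$, the piece $\vartheta(D^j s)$ contracts at rate $\max_{k \geq 2}|\alpha_k| < 1$ (since $D$ acts on $S_-$ with eigenvalues $\alpha_2,\ldots,\alpha_d$), while $\sigma^{j+m} k$ is supported on coordinates $\{i : i < -(j+m)\}$, so its contribution at any fixed coordinate eventually vanishes.

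Since $A$ is real-analytic and non-vanishing on $U(m,\epsilon)$ (chosen precisely so), after possibly shrinking $\epsilon$ one obtains a uniform estimate
\begin{equation*}
|\widehat a(y\alpha^j) - 1| \; = \; |A(\sigma^j \theta(y)) - A(0)| \; \leq \; C(L)\lambda^j, \qquad j \geq 0,\; y \in Y(L),
\end{equation*}
with $\lambda < 1$. Summability in $j$ guarantees that the partial products $P_J(y) = \prod_{j=0}^{J-1} \widehat a(y\alpha^j)$ converge as $J \to \infty$ to a nonzero limit, so $|P_J(y)| \geq \delta(y) > 0$ for all $J$. Iterating $($\ref{Fouriervarphi}$)$ forward yields $\widehat \varphi(y\alpha^J) = P_J(y)\widehat \varphi(y)$; if $\widehat \varphi$ vanishes at infinity then $\widehat \varphi(y\alpha^J) \to 0$, forcing $\widehat \varphi(y) = 0$. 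This establishes $Y(L) \subseteq \mathcal S(\widehat \varphi)\cap(-L,L)$, and sending $L \to \infty$ with Lemma \ref{bijection} completes the argument.

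The hard part, I expect, will be justifying the summable estimate above uniformly in $y \in Y(L)$: one must translate the product-topology decay of $\sigma^j \theta(y)$ into a quantitative bound on $|A(\sigma^j \theta(y)) - A(0)|$, balancing the exponential decay of the coefficients $a(k)$ in $($\ref{A}$)$ against the supports of the finitely supported $\tau_k$. Once that estimate is in hand the remainder is a clean application of the solenoidal representation together with the counting in Lemma \ref{bijection}.
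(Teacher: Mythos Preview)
Your proposal is correct and follows essentially the same route as the paper: show $Y(L)\subseteq \mathcal S(\widehat\varphi)$ by using that $\theta(y)\in U(m,\epsilon)\subset G_-$ forces $\sigma^j\theta(y)\to 0$ exponentially, whence the forward product $\prod_{j\geq 0}\widehat a(\alpha^j y)=\prod_{j\geq 0}A(\sigma^j\theta(y))$ converges to a nonzero limit and $\widehat\varphi(\alpha^J y)\to 0$ forces $\widehat\varphi(y)=0$; then invoke Lemma~\ref{bijection}. Two minor remarks: the argument is pointwise in $y$, so the uniformity you worry about in the last paragraph is not needed (a constant $C(y)$ suffices), and you should not ``shrink $\epsilon$'' since $\gamma$ depends on $\epsilon$ --- but no shrinking is required, as local Lipschitz continuity of $A$ at $0$ already turns the exponential decay of $\sigma^j\theta(y)$ into a summable bound on $|A(\sigma^j\theta(y))-1|$.
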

\begin{proof}
$($\ref{Y}$)$ implies that it suffices to show that for every $L > 0$ it suffices to show that
$\widehat \varphi$ equals $0$ at every point in $Y(L).$ Since $\widehat \varphi$ vanishes at infinity,
$$0 = \lim_{J \rightarrow \infty} \widehat \varphi (\alpha^J y) =
\lim_{J \rightarrow \infty} \widehat \varphi(y) \prod_{j = 1}^J \widehat a(\alpha^j y).$$
Since $y \in Y(L)$ and $\j \geq 1$ implies $\theta(\alpha^j y) = \sigma^j \theta(y) \in U(m,\epsilon),$
and since $A$ never vanishes on $U(m,\epsilon),$ $($\ref{solrep}$)$ implies that
$\widehat a(\alpha^j y) = A(\sigma^j(\theta(y)) \neq 0.$
Since $\sigma^j(\theta(y))$ converges to $0$ exponentially fast, $\prod_{j = 1}^\infty \widehat a(\alpha^j y) \neq 0,$
and hence $\widehat \varphi(y) = 0.$
\end{proof}
\begin{corollary}
\label{SA}
 If $\widehat \varphi$ vanishes at infinity then $\underline d(\mathcal S(\widehat a)) \geq (|\alpha|-1)\, \gamma.$
 $\mathcal S(A)$ is a union of embedded manifolds in $G$ and has dimension  $d-1.$
\end{corollary}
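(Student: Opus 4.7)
The plan is to handle the two claims of the corollary separately. First, the density bound follows immediately by combining Theorem \ref{lowerbound}, which under the hypothesis yields $\underline d(\mathcal S(\widehat \varphi)) \geq \gamma$, with the identity $(\ref{SFphi})$ relating $\underline d(\mathcal S(\widehat a))$ to $(|\alpha|-1)\,\underline d(\mathcal S(\widehat \varphi))$. Multiplying gives $\underline d(\mathcal S(\widehat a)) \geq (|\alpha|-1)\gamma$ at once.

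For the structural description of $\mathcal S(A)$, I would begin by observing that $A \not\equiv 0$: otherwise Theorem \ref{solrep} forces $\widehat a \equiv 0$, and then $(\ref{Fouriervarphi})$ propagates this to $\widehat \varphi \equiv 0$, contradicting $\widehat \varphi(0) \neq 0$. Next, pull $A$ back to $S$ via $\vartheta$; combining $(\ref{A})$ and $(\ref{vartheta})$, the function $F := A \circ \vartheta : S \to \mathbb{C}$ is an absolutely convergent sum of exponentials of real-linear forms in $s \in S \cong \mathbb{R}^d$, hence real-analytic on the $d$-dimensional real vector space $S$. By Theorem \ref{Gl}(5), $M = \vartheta(S)$ is the path component of $0 \in G$ and every path component of $G$ is a coset of $M$, so after translation $A$ is real-analytic on each path component. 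The \L ojasiewicz structure theorem then stratifies the zero set of the nontrivial real-analytic function $F$ (equivalently, of its nonnegative real-analytic square modulus $|F|^2$) as a locally finite union of embedded real-analytic submanifolds of dimensions $\leq d-1$, and transporting this decomposition across cosets in $G$ yields the claimed stratification of $\mathcal S(A)$.

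The main obstacle will be confirming that the dimension realized is exactly $d-1$, rather than possibly smaller, since a priori the simultaneous vanishing of the real and imaginary parts of a complex-valued real-analytic function is a codimension-$2$ condition. I would resolve this by leveraging the first part of the corollary together with the $\sigma$-invariance of $\mathcal S(A)$: the positive lower linear density of zeros of $\widehat a = A \circ \theta$ along the line $\theta(\mathbb R) \subset G$, combined with the density of $\theta(\mathbb R)$ in $G$ from Theorem \ref{Gl}(5) and the counting asymptotics of Lemma \ref{bijection}, should force $\mathcal S(F)$ to contain at least one stratum of dimension $d-1$ (so that a generic line parallel to $S_+$ meets the stratum transversally in the requisite density of points). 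Making this last implication fully rigorous requires care, since a codimension-$2$ variety can a priori meet a specific line in a set of positive linear density; but the exponential-sum form of $F$ together with the scalar-case calculations behind $(\ref{Bernoulli})$ strongly suggest that the real and imaginary parts of $F$ vanish simultaneously along a codimension-$1$ subvariety, giving dimension exactly $d-1$.
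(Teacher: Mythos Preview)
Your approach tracks the paper's closely: the density bound via Theorem~\ref{lowerbound} and $(\ref{SFphi})$, and the stratification of $\mathcal S(A)$ via Lojasiewicz's structure theorem for real-analytic sets, are exactly what the paper invokes.

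Where you hesitate, however, the paper is decisive. To force $\dim \mathcal S(A) = d-1$ it appeals not to the mere topological density of $\theta(\mathbb R)$ in $G$ (Theorem~\ref{Gl}(5)) but to its being a \emph{uniformly distributed} embedding, i.e.\ equidistribution with respect to Haar measure. That single upgrade dissolves your worry: if $\mathcal S(A)$ were a real-analytic set of dimension at most $d-2$, an equidistributed one-parameter subgroup would meet it with linear density $0$ (heuristically, the $\epsilon$-tube about $\mathcal S(A)$ has Haar measure $O(\epsilon^{2})$, the line spends that proportion of its length there, and each transversal visit has length of order $\epsilon$, so the crossing density is $O(\epsilon)\to 0$), contradicting $\underline d(\mathcal S(\widehat a)) \geq (|\alpha|-1)\gamma > 0$ from the first part. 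Your fallback suggestion, that the real and imaginary parts of $F$ must share a codimension-$1$ zero set because of the exponential-sum structure, is therefore unnecessary and, as you yourself note, is not yet an argument as it stands.
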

\begin{proof}
The first assertion follows from $($\ref{SFphi}$).$ Since $\mathcal S(A)$ is an real-analytic set it is homeomorphic to
a union of embedded manifolds by Lojasiewicz's structure theorem for real-analytic sets \cite{krantzparks}, \cite{lawton2}, \cite{lojasiewicz}. Since $\theta(\mathbb R)$ is a uniformly distributed embedding, if the dimension of $\mathcal S(A)$ were
less than $d-1$ then $\underline d(\mathcal S(\widehat a)) = 0.$
\end{proof}
\begin{theorem}\label{norms}
 If $\alpha$ is a PV number of degree $d \geq 2$ then the set of norms $N(\Lambda_\alpha)$ is a set of rational numbers whose denominators
 have only a finite number of prime divisors and whose numerators are values of integral forms $($homogeneous polynomials$)$ of degree $d$
 in $d$ integer variables. The number of these integer values having modulus $\leq L$ is asymptotically bounded below by
  $O(L^{2/d})$ for $d \geq 3$ and by $O(L/(\log L)^p)$ for some $p \in (0,1)$ for $d = 2.$
\end{theorem}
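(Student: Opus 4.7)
The plan is first to describe $\Lambda_\alpha$ algebraically through the lattice appearing in Theorem \ref{pisot}, then to identify the resulting norms as values of integral forms, and finally to invoke the cited density theorems for the counting bound.

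Theorem \ref{pisot} presents $\Lambda_\alpha$ as $\{\alpha^m \mu : m \in \mathbb Z,\ \mu \in L \setminus \{0\}\}$, where $L = \{\mu \in \mathbb Q[\alpha] : T(\mu \alpha^j) \in \mathbb Z,\ 0 \leq j \leq d-1\}$ is the trace-dual of $\mathbb Z[\alpha]$. Non-degeneracy of the trace pairing makes $L$ a free $\mathbb Z$-module of rank $d$, and the inclusion $\mathbb Z[\alpha] \subseteq \mathcal O_\alpha \subseteq L$ combined with (\ref{Oinclusion}) supplies an integer $D \in \mathbb N$ with $DL \subseteq \mathbb Z[\alpha]$. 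Fixing a $\mathbb Z$-basis $e_0, \ldots, e_{d-1}$ of $L$, expanding $De_k = \sum_j a_{kj} \alpha^j$ with $a_{kj} \in \mathbb Z$, and setting $F(x_0, \ldots, x_{d-1}) = N\bigl(D \sum_k x_k e_k\bigr)$, symmetrization over the Galois conjugates exhibits $F$ as a homogeneous polynomial of degree $d$ in $d$ variables with integer coefficients. The identity $N(\lambda) = N(\alpha)^m F(x)/D^d$ for $\lambda = \alpha^m \sum_k x_k e_k$, together with $N(\alpha) = (-1)^d c_0 \in \mathbb Z \setminus \{0\}$, then shows that every element of $N(\Lambda_\alpha)$ is a rational whose denominator has prime divisors only in the finite set of primes dividing $D N(\alpha)$, and whose numerator $N(\alpha)^{\max(m,0)} F(x)$ is a value of an integer form of degree $d$ in $d$ variables.

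For the counting bound, I would specialize to $m = 0$ and $\mu = x_0 + x_1 \alpha \in \mathbb Z[\alpha] \subseteq L$. Here $N(\lambda) = G(x_0, x_1) = N(x_0 + x_1 \alpha) = (-1)^d x_1^d P_\alpha(-x_0/x_1)$ is the homogenization of the minimal polynomial of $\alpha$. Because $P_\alpha \in \mathbb Z[X]$ is irreducible of degree $d$, $G$ is an irreducible integer binary form of degree $d$ with nonzero discriminant, and its integer values appear among the numerators of $N(\Lambda_\alpha)$ with denominator $1$. The Erd\"os--Mahler theorem \cite{erdosmahler} for binary forms of degree $\geq 3$ with nonzero discriminant then yields the lower bound $\gg L^{2/d}$ on the number of integer values of $G$ of modulus at most $L$ when $d \geq 3$, and Odoni's theorem \cite{odoni} for irreducible integer binary quadratic forms yields the corresponding bound $\gg L/(\log L)^p$ with some $p \in (0,1)$ when $d = 2$; both bounds transfer directly to the numerator set of $N(\Lambda_\alpha)$.

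The main obstacle is not the analytic counting, which is imported wholesale from Erd\"os--Mahler and Odoni, but rather the algebraic bookkeeping in the first step: showing that $L$ is a full-rank lattice trapped between $\mathbb Z[\alpha]$ and $\tfrac{1}{D}\mathbb Z[\alpha]$ (which rests on the non-degeneracy of the trace pairing and is controlled by the different of $\mathbb Z[\alpha]$), and verifying that the norm form $F$ is genuinely integer-coefficient rather than merely integer-valued on $\mathbb Z^d$. Once these routine but careful verifications are in place, the irreducibility hypothesis required by the binary-form density theorems follows immediately from the irreducibility of $P_\alpha$.
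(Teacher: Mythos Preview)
Your proposal is correct and follows essentially the same route as the paper: identify $\Lambda_\alpha$ with the trace-dual lattice of $\mathbb Z[\alpha]$ (the paper does this via $V^{-1}$, whose rows are the Lagrange interpolation coefficients and hence give exactly that dual basis), compute the norm as a degree-$d$ form in $d$ integer variables with denominators dividing a fixed integer, then specialize to a binary form and invoke Erd\H{o}s--Mahler and Odoni.

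The one noteworthy difference is in the specialization step. The paper simply sets all but two of the $d$ integer variables to zero in the norm form $N(\mu_1)$, without naming which two or checking that the resulting binary form has nonzero discriminant (needed for Erd\H{o}s--Mahler) or is irreducible (needed for Odoni). You instead pick the concrete specialization $\mu = x_0 + x_1\alpha \in \mathbb Z[\alpha] \subseteq L$, so that the binary form is the homogenization of $P_\alpha$; then irreducibility and nonzero discriminant are immediate from the irreducibility of $P_\alpha$. This is a cleaner way to secure the hypotheses of the cited theorems, but it is the same idea executed with a specific, well-chosen pair of variables.
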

\begin{proof}
Let $\alpha = \alpha_1,...\alpha_d$ be the Galois conjugates of $\alpha.$
Theorem \ref{pisot} implies that $\lambda \in \Lambda_\alpha$ iff
there exist $m \in \mathbb Z$ and $\mu_k \in \mathbb Q[\alpha_k)$ such that
$\lambda = \mu_1 \alpha^j$ where $[\mu_1,...,\mu_d]^T \in V^{-1}\mathbb Z^d.$
The elements of the $k$th row of $V^{-1},$ being the coefficients of the Lagrange interpolating polynomial
$Q_k(X) = \frac{P_{\alpha}(X)}{(X-\alpha_k)P^{\prime}(\alpha_k)},$ belong to the field $\mathbb Q[\alpha_k]$ 
and the elements in every column of $V^{-1}$ are Galois conjugates. Therefore $N(\lambda) = N(\alpha)^m\, N(\mu_1),$ 
and $N(\mu_1) = \prod_{k=1}^d \mu_k$ is a form with rational coefficients of degree $d$ in $d$ integer variables
$($the coordinates of $\mathbb Z^d).$ The denominators of the coefficients of the form $N(\mu_1)$ must divide $\det V$ so
the prime factors of the denominators of the numbers in $N(\Lambda_\alpha)$ must divide $N(\alpha)$ or $\det V.$ Therefore
a positive fraction of the numbers in $N(\Lambda_\alpha)$ have numerators that are values of an integral form of degree $d$ in $d$
integer variables. For $d \geq 3$ we obtain a binary form of degree $3$ by setting all except $2$
of these integer variables to $0,$ and obtain the lower asymptotic bound for the values of the numerators 
by a Theorem of Erd\"{o}s and Mahler \cite{erdosmahler}.
For $d = 2$ we obtain a binary quadratic form and we obtain a lower asymptotic bound 
given by a Theorem of Odoni (\cite{odoni}, Theorem S).
\end{proof}
We refer the reader to Section 4 for a discussion of Pontryagin duality. If $\chi \in \widehat G$ and $c \in \mathbb T_c$ then the zero set
$\mathcal S(\chi - c) = \{g \in G:\chi(g) - c = 0 \}$ has dimension $d-1.$
We call a real-analytic subset of $G$ ${\it simple}$ if it is contained
in a finite union of such sets. Lagarias and Yang conjectured \cite{lagariaswang} that certain real-analytic subsets of $\mathcal T^n,$ that arise in the construction of refinable functions of several variables related to tilings and that are analogous to our set $\mathcal S(A),$ are simple. We used Lojasiewicz's theorem \cite{lawton2} to prove their conjecture. Thus we find the following result interesting:
\begin{theorem}\label{simple}
 If $A$ is nonzero then $\mathcal S(A)$ is not simple.
\end{theorem}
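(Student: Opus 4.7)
The plan is to argue by contradiction. Suppose $\mathcal{S}(A)\subseteq\bigcup_{i=1}^N V_i$ with $V_i=\mathcal{S}(\chi_i-c_i)$, each $\chi_i\in\widehat G$ non-trivial and $c_i\in\mathbb{T}_c$. Each $V_i$ is a coset of the $(d-1)$-dimensional closed connected sub-solenoid $H_i=\ker\chi_i$.

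First I would invoke Lojasiewicz's structure theorem \cite{krantzparks,lojasiewicz} to extract a point $g_0$ at which $\mathcal{S}(A)$ is locally a smooth $(d-1)$-dimensional hypersurface (the top-dimensional stratum of the real-analytic decomposition). The hypothesised containment forces $g_0\in V_{i_0}$ for some $i_0$; the two smooth hypersurfaces of equal dimension must then coincide on a neighbourhood of $g_0$ in $V_{i_0}$, so $A\vert_{V_{i_0}}$, being a real-analytic function on the connected sub-solenoid $V_{i_0}$, vanishes on a nonempty open subset and hence identically. Thus $V_{i_0}\subseteq\mathcal{S}(A)$.

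Next I would translate this into a Fourier-coefficient obstruction. Using the Pontryagin duality $\widehat G\cong\mathbb{Z}[\alpha,\alpha^{-1}]$ and the expansion $A=|\alpha|^{-1}\sum_k a(k)\chi_{-\tau(k)}$, writing $V_{i_0}=g_{i_0}+H_{i_0}$ and grouping characters by cosets of $\mathbb{Z}\lambda_{i_0}$ in $\widehat G$ yields, for every coset $\mu_0+\mathbb{Z}\lambda_{i_0}$, the identity
\begin{equation*}
\sum_{\{k\,:\,-\tau(k)\in\mu_0+\mathbb{Z}\lambda_{i_0}\}} a(k)\,c_{i_0}^{\,m_k}=0,
\end{equation*}
where $m_k$ is defined by $-\tau(k)=\mu_0+m_k\lambda_{i_0}$.

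The final contradiction is the main obstacle. Since $\mathbb{Z}[\alpha,\alpha^{-1}]$ has $\mathbb{Z}$-rank $\geq d\geq 2$ while $\mathbb{Z}\lambda_{i_0}$ has rank one, the quotient $\widehat G/\mathbb{Z}\lambda_{i_0}$ is infinite, and the sequence $-\tau(\mathbb{N})$ meets infinitely many cosets. Any coset whose intersection with $-\tau(\mathbb{N})$ is a singleton $\{-\tau(k_\ast)\}$ produces the relation $a(k_\ast)c_{i_0}^{m_{k_\ast}}=0$, forcing $a(k_\ast)=0$; iterating over such cosets forces $a\equiv 0$, contradicting $A\not\equiv 0$. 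The residual case $\dim\mathcal{S}(A)<d-1$, in which the Lojasiewicz reduction does not immediately produce any $V_{i_0}\subseteq\mathcal{S}(A)$, I would handle by projecting along the dense immersion $\theta$: the hypothesised covering forces $\mathcal{S}(\widehat a)=\theta^{-1}\mathcal{S}(A)\subseteq\bigcup_i(t_i+\mathbb{Z})/\lambda_i$, a finite union of arithmetic progressions, and ruling this out requires exploiting the almost-periodic structure of $\widehat a=A\circ\theta$ together with Theorem~\ref{upperbound}.
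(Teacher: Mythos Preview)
There is a genuine gap at your Fourier--coefficient step. The assertion that enough cosets of $\mathbb{Z}\lambda_{i_0}$ meet $-\tau(\mathbb{N})$ in singletons to force $a\equiv 0$ is unsupported and in fact fails. Take $\tau$ integer-valued: then $A$ factors through $\rho_1:G\to\mathbb{T}$, so $\mathcal{S}(A)$ is precisely a finite union of level sets of the character corresponding to $1\in\mathbb{Z}[\alpha,\alpha^{-1}]$ and is therefore \emph{simple}. Here each relevant $\lambda_{i_0}$ equals $1$, all of $-\tau(\mathbb{N})$ lies in the single coset $\mathbb{Z}$, and your relation collapses to one scalar equation $\sum_k a(k)c_{i_0}^{-\tau(k)}=0$, which forces nothing. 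This example shows that the statement cannot be proved without the standing hypothesis that $\widehat\varphi$ vanishes at infinity; your argument never invokes that hypothesis, so it cannot succeed as written.

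The paper's proof is entirely different and number-theoretic rather than geometric. From the argument of Theorem~\ref{lowerbound} (which does use vanishing at infinity) one extracts that every homoclinic orbit $\{\lambda\alpha^j:j\in\mathbb{Z}\}$, $\lambda\in\Lambda_\alpha$, meets $\mathcal{S}(\widehat a)$. Theorem~\ref{norms} then shows, via the Erd\"{o}s--Mahler and Odoni theorems on integers represented by binary forms, that the norms $N(\mathcal{S}(\widehat a))$ produce strictly more than $O(L^{1/d})$ numerator values of modulus $\le L$. On the other hand, if $\mathcal{S}(A)$ were simple then $\mathcal{S}(\widehat a)$ would lie (up to finitely many points) in finitely many arithmetic progressions $\beta+\delta\mathbb{Z}$ with $\beta,\delta\in\mathbb{Q}[\alpha]$; since $N(\beta+\delta k)$ is a degree-$d$ polynomial in the single integer variable $k$, only $O(L^{1/d})$ norm values of size $\le L$ can arise, and the density mismatch gives the contradiction. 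Your residual-case sketch does reach the same arithmetic-progression reduction, but the contradiction you propose via Theorem~\ref{upperbound} points the wrong way: an \emph{upper} bound on $\overline d(\mathcal{S}(\widehat a))$ cannot rule out containment in arithmetic progressions. What is needed is a \emph{lower} bound on the number of distinct norm values, and this is exactly where the Erd\"{o}s--Mahler and Odoni input is indispensable.
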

\begin{proof}
The argument used in the proof of Theorem \ref{lowerbound} shows that
for every $\lambda \in \Lambda_\alpha$ there exists $m \in \mathbb Z$ such that $\lambda\alpha^m \in \mathcal S(\widehat a).$
Therefore Theorem \ref{norms} implies that the set of norms $N(\mathcal S(\widehat a))$ contains a set of rational numbers whose
numerators whose modulus is less than $L$ has asymptotic density $> O(L^{1/d}).$ If $\mathcal S(A)$ were a proper simple subset of $G$
then all but a finite number of points in $\mathcal S(\widehat a)$ would be contained in a finite union of sets having the form
$\beta + \delta \mathbb Z$ where $\beta$ and $\delta$ are elements in $\mathbb Q[\alpha].$ However $N(\beta + \delta k)$ is
a form of degree $d$ with rationall coefficients in single integer variable $k$ and therefore the set of numerators of the
values of this form has asymptotic density $O(L^{1/d}),$ thus giving a contradiction.
\end{proof}
\begin{remark}
Integral binary quadratic forms were studied by Gauss \cite{gauss}, who focussed on forms having negative discriminant.
Asymptotic estimates for the number of integers represented by integral binary quadratic forms with negative discriminant
were obtained in the doctoral dissertation of Bernays \cite{bernays} and by James \cite{james}. Numerical studies were compiled
by Sloan \cite{sloan}.
\end{remark}
Inspired by developments in Diophantine geometry and o-minimal theory we make the following assertion
whose validity proves Conjecture \ref{conj}.
\begin{conjecture}
Every real-analytic subset of $G$ that intersects every homoclinic orbit is simple.
\end{conjecture}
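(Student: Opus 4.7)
The plan is to adapt the Pila--Zannier method from Diophantine geometry: the solenoid $G$ plays the role of an abelian variety, and homoclinic points play the role of torsion. I would combine two density estimates on the set of homoclinic points of bounded arithmetic height: the lower bound of Theorem \ref{norms} (already used in the proof of Theorem \ref{simple}), together with a Pila--Wilkie style upper bound for the transcendental part of $X$.

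First I would define a height on $H = \theta(\Lambda_\alpha)$ using the parameterization of Theorem \ref{pisot}: for $\lambda = \alpha^m \mu$ with $V\mu \in \mathbb{Z}^d$, set $\mathrm{ht}(\lambda) = \max(|N(\alpha)|^{|m|}, \|V\mu\|_\infty)$. This height scales by at most a uniform factor under $\sigma$, and Theorem \ref{norms} gives
\[
\#\{[\lambda] \in \Lambda_\alpha / \langle \alpha \rangle : \mathrm{ht}(\lambda) \leq B\} \gg B^{2/d}
\]
for $d \geq 3$ (and the log-type bound of Odoni when $d = 2$). Let $X \subset G$ be real-analytic and meet every homoclinic orbit, and write $X = X^{\mathrm{simp}} \cup X^{\mathrm{tr}}$, where $X^{\mathrm{simp}}$ is the maximal simple subset of $X$.

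Second, the argument from the proof of Theorem \ref{simple}, applied to the cosets $\beta + \delta \mathbb{Z}$ composing $X^{\mathrm{simp}}$, shows that $X^{\mathrm{simp}}$ meets at most $O(B^{1/d})$ homoclinic orbits of height $\leq B$; therefore $\gg B^{2/d}$ orbits of height $\leq B$ must meet $X^{\mathrm{tr}}$, producing
\[
\#\{h \in X^{\mathrm{tr}} \cap H : \mathrm{ht}(h) \leq C B\} \gg B^{2/d}
\]
for a fixed constant $C$ absorbing the $\sigma$-ambiguity. The contradiction would come from a Pila--Wilkie counting theorem: using the extension $0 \to K \to G \to \mathbb{T}^d \to 0$ of Theorem \ref{Gl}, one presents $G$ as a family over $K$ of real-analytic fundamental domains in $\mathbb{R}^d$ and applies Pila--Wilkie fiberwise to conclude that for every $\epsilon > 0$ the number of algebraic points on $X^{\mathrm{tr}}$ of height $\leq B$ is $\ll_\epsilon B^\epsilon$, contradicting the previous lower bound once $\epsilon < 2/d$.

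The main obstacle will be the Pila--Wilkie step. The classical theorem is formulated for globally subanalytic sets in Euclidean space, whereas when $|c_0| > 1$ the group $G$ is an extension by the Cantor group $K$, so one needs either a profinite-over-o-minimal version of Pila--Wilkie or a reduction to the case $|c_0| = 1$, where $G \cong \mathbb{T}^d$ and the classical theorem applies directly through the covering $\mathbb{R}^d \to \mathbb{T}^d$. A secondary technical point is to show that the Pila--Wilkie height on algebraic points in a subanalytic set is commensurable with $\mathrm{ht}$ restricted to $H$, so that the two counting regimes can be compared. Both difficulties are of o-minimal nature and parallel the corresponding hurdles in the original Pila--Zannier work on abelian varieties.
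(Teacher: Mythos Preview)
The statement is recorded in the paper as a \emph{conjecture}, not a theorem: the authors offer no proof, saying only that the assertion is ``inspired by developments in Diophantine geometry and o-minimal theory'' and that its validity would settle Conjecture~\ref{conj}. There is therefore no proof in the paper to compare your proposal against; your Pila--Zannier outline is exactly the kind of programme the authors are gesturing toward.

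That said, your sketch has a structural gap beyond the two obstacles you already flag. The decomposition $X = X^{\mathrm{simp}} \cup X^{\mathrm{tr}}$ with $X^{\mathrm{simp}}$ the ``maximal simple subset of $X$'' is not well posed: in the paper ``simple'' means \emph{contained in a finite union} of character level-sets $\{g:\chi(g)=c\}$, so the union of all simple subsets of $X$ need not itself be simple, and the $O(B^{1/d})$ bound you invoke from the proof of Theorem~\ref{simple} --- which is proved for a \emph{single} coset $\beta+\delta\mathbb{Z}$ --- does not automatically transfer to an a priori infinite union. More importantly, the Pila--Wilkie theorem bounds rational points only on the complement of the \emph{semialgebraic} part of (a definable lift of) $X$, not on the complement of its simple part. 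Bridging the two requires an Ax--Lindemann-type statement: every positive-dimensional connected semialgebraic subset of the lift of $X$ to $\mathbb{R}^d$ is contained in the lift of a character level-set. In the abelian-variety setting this is precisely the step supplied by Ax's theorem, and it is where the group structure does real work; here you would need to formulate and prove an analogue for the uniformization $\vartheta:S\to G$. Without it your final contradiction does not go through, since $X^{\mathrm{tr}}$ as you define it may still contain semialgebraic arcs carrying polynomially many homoclinic points.
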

\section{Appendix: Pontryagin Duality and the Kronecker-Weyl Theorem}
\label{app}
A {\it character} of a locally compact abelian topological group $G$ is a continuous homomorphism $\chi : G \rightarrow \mathbb T_c.$ The
{\it dual group} $\widehat G$ consists of all characters under pointwise multiplication and the topology of uniform convergence on compact
subsets.
The {\it Pontryagin duality} theorem says that the homomorphism $\gamma : G \rightarrow \widehat {\widehat G}$
\begin{equation}\label{nat}
  \gamma(g)(\chi) = \chi(g), \ \ g \in G, \ \chi \in \widehat G
\end{equation}
is a bijective isomorphism. This was proved for second countable groups that are either compact or discrete in 1934 by
Lev Semyonovich Pontryagin \cite{pontrjagin}  and extended to general locally compact groups in 1934 by Egbert van Kampen \cite{vankampen}.
This theory shows that $G$ is compact; discrete; connected; dimension $d$  iff $\widehat G$ is discrete; compact; torsion free; rank $d.$ For $a \in \mathbb Z^n$ define $\chi_a \in \widehat {\mathbb T^n}$ by
$\chi_a(g) = \exp 2\pi i (a(1)g(1) + \cdots + a(n)g(n)).$
The mapping $a \rightarrow \chi_a$ is an isomorphism.
If $H$ is a closed subgroup of $G$ then the quotient $G/H$ is locally compact and we have an obvious injective homomorphism $\widehat {G/H} \rightarrow \widehat G.$ Therefore Pontrygin duality implies that $H$ is proper iff every character on $G$ that vanishes on $H$ vanishes on $G.$ This gives the following classical result \cite{weyl}.
\begin{lemma}\label{Weyl}(Kronecker-Weyl Theorem)
    If $v \in \mathbb R^n$ then $\mathbb R v + \mathbb Z^n$ is dense in $\mathbb T^n$ iff the entries of $v$ are rationally independent.
\end{lemma}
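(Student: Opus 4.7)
The plan is to derive the Kronecker--Weyl theorem directly from the Pontryagin duality criterion already recorded in the appendix: a closed subgroup $H$ of a locally compact abelian group $G$ coincides with $G$ iff every character of $G$ that vanishes identically on $H$ is the trivial character. I would apply this criterion with $G = \mathbb T^n$ and $H$ the closure of $\mathbb R v + \mathbb Z^n$ inside $\mathbb T^n$, which is a closed subgroup because addition and negation are continuous on $\mathbb T^n$.

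Next I would use the identification, stated in the appendix, of $\widehat{\mathbb T^n}$ with $\mathbb Z^n$ via $a \mapsto \chi_a$ where $\chi_a(g) = \exp 2\pi i (a(1)g(1) + \cdots + a(n)g(n))$. Because $\chi_a$ is continuous, it vanishes identically (i.e.\ equals $1$) on $H$ iff it vanishes on the dense subgroup $\mathbb R v + \mathbb Z^n$. Every $\chi_a$ is automatically trivial on $\mathbb Z^n$, so the condition reduces to $\chi_a(tv) = \exp 2\pi i t (a(1)v(1) + \cdots + a(n)v(n)) = 1$ for all $t \in \mathbb R$, which holds iff the real inner product $a(1)v(1) + \cdots + a(n)v(n)$ equals $0$.

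Combining these two steps gives the equivalence directly: $H = \mathbb T^n$ iff the only $a \in \mathbb Z^n$ with $a(1)v(1) + \cdots + a(n)v(n) = 0$ is $a = 0$, and this last condition is exactly the statement that the entries $v(1),\dots,v(n)$ are rationally independent. For the forward implication I would observe that any nontrivial integer relation $\sum a(k) v(k) = 0$ produces a nontrivial character vanishing on $H$, forcing $H$ to be proper; for the converse I would observe that rational independence kills every nontrivial character, so by the Pontryagin criterion $H$ must be all of $\mathbb T^n$.

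There is essentially no serious obstacle here since both ingredients (Pontryagin duality and the explicit form of characters on $\mathbb T^n$) are already laid out in the appendix; the only care needed is the continuity argument that passes from $\chi_a \equiv 1$ on the dense subgroup $\mathbb R v + \mathbb Z^n$ to $\chi_a \equiv 1$ on its closure $H$, and the observation that $\exp 2\pi i t c = 1$ for every real $t$ forces $c = 0$.
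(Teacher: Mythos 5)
Your proposal is correct and follows essentially the same route as the paper: both reduce the density question, via the Pontryagin duality criterion for proper closed subgroups, to the explicit characters $\chi_a$ of $\mathbb T^n$, and then observe that $\chi_a$ vanishes on $\mathbb R v + \mathbb Z^n$ iff $a^{T}v = 0$, which has a nonzero integer solution exactly when the entries of $v$ are rationally dependent. The extra details you supply (passing from the dense subgroup to its closure by continuity, and that $\exp(2\pi i t c) = 1$ for all $t$ forces $c = 0$) are implicit in the paper's shorter argument.
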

\begin{proof}
The closure $H = \overline {\mathbb Rv + \mathbb Z^n}$ is a closed subgroup of $\mathbb T^n$ and is proper iff there exists $a \in \mathbb Z^n\backslash \{0\}$ with
$
    \chi_a(tv + \mathbb Z^n) = \exp (2\pi i ta^{T}v), \ t \in \mathbb R,
$
or equivalently, $a^{T}v = 0.$ This occurs iff the entries of $v$ are rationally dependent.
\end{proof}
\section*{Acknowledgement} We thank Keith Mathews and John Robertson for helpful discussions about the representation of
integers by integral binary quadratic forms.

\end{document}